\newtheorem{lemma}{Lemma}
\newtheorem{proposition}[lemma]{Proposition}
\newtheorem{remark}[lemma]{Remark}
\newtheorem{theorem}[lemma]{Theorem}
\newcommand{\e}{\varepsilon}
\newcommand{\disp}{\displaystyle}
\def\ZZ{\mathbb Z}
\def\NN{\mathbb Z}
\def\RR{\mathbb R}
\def\dist{\hbox{\rm dist}}
\title{Motion of discrete interfaces through mushy layers}
\author{Andrea Braides \\Dipartimento di Matematica, Universit\`a di Roma Tor Vergata
\\ via della ricerca scientifica 1, 00133 Roma, Italy\\ \\ Margherita Solci\\
DADU, Universit\`a di Sassari\\
 piazza Duomo 6, 07041 Alghero (SS), Italy}
\date{}                                           
\begin{document}
\maketitle

\abstract \noindent We study the geometric motion of sets in the plane derived from 
the homogenization of discrete ferromagnetic energies with weak inclusions.
We show that the discrete sets are composed by a `bulky' part and an external
`mushy region' composed only of weak inclusions. The relevant motion is 
that of the bulky part, which asymptotically obeys to a motion by crystalline mean 
curvature with a forcing term, due to the energetic contribution
of the mushy layers, and pinning effects, due to discreteness. From an analytical standpoint
it is interesting to note that the presence of the mushy layers imply only a weak
and not strong convergence of the discrete motions, so that the convergence
of the energies does not commute with the evolution. From a mechanical standpoint
it is interesting to note the geometrical similarity of some phenomena in
the cooling of binary melts.

\section{Introduction}
A definition of motion by curvature has been introduced by Almgren, Taylor and Wang \cite{ATW}
using a time-discrete approach as follows. Given a (smooth) set $A_0\subset\RR^d$ as initial datum
and a time scale $\tau$, one defines iteratively $A^{\tau}_k$ as a minimizer of
\begin{equation}
A\mapsto {\rm Per}(A)+{1\over \tau} D(A,A^{\tau}_{k-1}),
\end{equation}
where $A^\tau_0=A_0$ , ${\rm Per}(A)$ denotes the Euclidean perimeter of the set 
$A$ and $D(A,A')$ is a dissipation 
term that can be interpreted as the $L^2$-norm of the distance between $\partial A$ and 
$\partial A'$. The time-continuous piecewise-constant interpolations 
$A^\tau(t)= A^\tau_{\lfloor t/\tau\rfloor}$ are then shown to converge to a time-continuous
parameterized sets $A(t)$, whose boundaries move by their mean curvature. Almgren and 
Taylor \cite{AT} have shown that the same scheme with a crystalline perimeter gives
motion by crystalline curvature in dimension two. 

The same scheme has been adapted to define a continuum motion for ferromagnetic spin energies
on a square lattice whose static discrete-to-continuum approximation is a crystalline perimeter by Braides,
Gelli and Novaga \cite{BGN}. In this process a scaling factor $\e>0$ and the corresponding 
perimeter  ${\rm Per}_\e(A)$ for discrete sets $A$ in $\e\ZZ^2$ are introduced, together with 
the corresponding discrete dissipations $D_\e$. These can be seen simply as the restriction of their continuum
counterparts union $A+[0,\e]^2$ of $\e$-cubes. In this way discrete sets $A^{\e,\tau}_{k}\subset \e\ZZ^2$ 
are defined by iterated minimization of 
\begin{equation}\label{ATWe}
A\mapsto {\rm Per}_\e(A)+{1\over \tau} D_\e(A,A^{\e,\tau}_{k-1}),
\end{equation}
where $A^{\e,\tau}_0$ are discrete interpolations of a continuum datum $A_0$
together with the corresponding piecewise-constant-in-time interpolations 
$A^{\e,\tau}(t)=A^{\e,\tau}_{\lfloor t/\tau\rfloor}$. 
The limit $A$ of $A^{\e,\tau}$ may depend on the mutual behaviour 
of $\e$ and $\tau$. 

This procedure can be framed in the theory of {\em minimizing movements} by De Giorgi (see \cite{AGS}), which 
generalizes the approach of \cite{ATW}. In \cite{B-LN} a notion of {\em minimizing movement along a
sequence of functionals} has been given, that can be specialized for possibly inhomogeneous perimeter-type 
energies $F_\e$ on $\e\ZZ^2$: substituting $ {\rm Per}_\e$ with $F_\e$ in the scheme above we can similarly 
define $A^{\e,\tau}$ and obtain a limit motion $A(t)$ passing to the limit both in $\e$ and $\tau$.
In particular the following holds, upon the hypothesis of equi-coerciveness
of $F_\e$ and their $\Gamma$-convergence to some $F$:

(i) ({\em pinning}) if $\tau<\!< \e$ then $A(t)=A_0$ for all $t\ge 0$;

(ii) ({\em commutation})  if $ \e<\!<\tau$ then $A(t)$ is the minimizing movement of $F$ 
with initial datum $A_0$ (hence, in the case of $F_\e= {\rm Per}_\e$ the sets $A$ move by crystalline curvature);

(iii) ({\em critical scale}) if $ \e\sim\tau$ then the motion actually depends on $\e/\tau$ 
and is different both from (1) or (2).

In \cite{BGN} this last case is explicitly described through an {\em effective motion}: the limit
$A(t)$ depends on the ratio $\e/\tau$, and this motion may 
depend on fine details of the energies $F_\e$ and not only through their 
$\Gamma$-limit (see \cite{BScilla,Scilla}). The mechanism of evolution highlighted by 
the definition of $A^{\e,\tau}_k$ is through local minimization of $F_\e$ with a 
dissipation contribution that forces minimization on a small neighbourhood of the 
datum $A^{\e,\tau}_{k-1}$. If $\tau$ is much smaller than $\e$ then by the discreteness
of the parameters this neighbourhood contains the only $A^{\e,\tau}_{k-1}$, and the
motion is pinned. Conversely, if $\e$ is much smaller than $\tau$ we can first pass
in the limit as $\e\to0$ in (\ref{ATWe}) and use the well-known property of convergence
of minimum problems for $\Gamma$-convergence. In the critical case the motion
optimizes the location of the interface combining energy and dissipation effects.

\medskip
In this paper we consider $F_\e$ a sequence of {\em double-porosity} type energies,
which mix `strong'  ferromagnetic interactions with `weak' inclusions 
and still $\Gamma$-converge to a crystalline perimeter (see \cite{BCPS}). 
In this case the mechanism governing the time-discrete motion is of a different type
from \cite{BGN,BScilla}: since the perimeter energy due to weak inclusions 
in $A^{\e,\tau}_{k-1}$ may be small with respect to the dissipation necessary to 
remove them from $A^{\e,\tau}_{k}$, the latter is composed of a `bulky' part,
plus an external {\em mushy layer} composed of weak inclusions. This 
suggestive terminology is borrowed from theories in Fluid Mechanics where
similar geometries appear in binary melts at solidification \cite{HW,W1,W2}.
These mushy 
layer may then disappear at the next step. As a result the final motion is not a simple
motion by crystalline curvature, but it also contains a forcing term as a result
of the effect of the mushy layers. This is true also for $\e<\!<\tau$ for which we have
the law of motion
$$
v= (a\kappa-b)^+
$$
relating the velocity and the crystalline curvature. Note in particular that the conclusion 
(ii) above is violated. This is explained by a loss of coerciveness of the energies
$F_\e$ as $\e\to0$.

\section{Discrete setting and statement of the problem}\label{insiemi}
We are interested in describing a geometric continuum motion 
derived as the limit of time-discrete motions defined for discrete sets
of $\e\ZZ^2$ as $\e\to 0$ in the spirit of {\em minimizing movements
along a sequence of energies} \cite{B-LN}. In the specific two-dimensional
case we are dealing with, the relevant information about the limit motion
is obtained by considering initial data which are coordinate rectangles.
The motion for more general sets can be derived from that case and
obeys the same motion by crystalline curvature \cite{AT,BGN}.

We will examine the time-discrete motions at fixed $\e$ and let eventually $\e\to 0$.
For the sake of simplicity of notation, we
will state our problems in terms discrete subsets $I$ in $\ZZ^2$ without scaling them by $\e$,
but keep in mind that we are interested in the corresponding scaled sets $\e I$.

With fixed $\alpha, \beta>0$, for $\e>0$ we define on $I\subset \ZZ^2$ the energy 
\begin{equation}\label{def-energia} 
F_\e(I)=\e\beta\left\{(i,j)\in \hbox {NN}_s: \ i\in I, j\not\in I\right\} 
+\e^2\alpha\left\{(i,j)\in \hbox {NN}_w: \ i\in I, j\not\in I\right\},
\end{equation}
where 
\begin{eqnarray}
&&\hbox {NN}_s=\left\{(i,j)\in \ZZ^2\times\ZZ^2: \ \|i-j\|_\infty=1 \hbox{ and } i_1=j_1 \hbox{ odd or } i_2=j_2 \hbox{ odd} \right\}\nonumber \\
&&\hbox {NN}_w=\left\{(i,j)\in \ZZ^2\times\ZZ^2: \ \|i-j\|_\infty=1 \hbox{ and } i_1=j_1 \hbox{ even or } i_2=j_2 \hbox{ even} \right\}.\nonumber 
\end{eqnarray}
Besides these energies we introduce discrete dissipations, which take into account the
$L^2$ distance of the boundaries of the discrete sets, given by
\begin{equation}\label{def-dist} 
D_\e(I, I^\prime)=\e^3 \Bigl(\sum_{i\in I^\prime\setminus I} \dist_\infty(i,c(I^\prime)) 
+ \sum_{i\in I\setminus I^\prime} \dist_\infty(i,I^\prime) \Bigr), 
\end{equation}
where $I, I^\prime\subset \ZZ^2$ and $c(I^\prime)=\ZZ^2 \setminus I^\prime$.  

Let $\e>0$ and $\tau>0$ be fixed, together with 
a discrete coordinate rectangle $I_0$ with the four vertices in $(2\ZZ^2)$ 
(we omit the possible dependence on $\e$). 
We construct a sequence $\{I_n\}$,  where $I_n$ minimizes 
\begin{eqnarray}
E_\e(I, I_{n-1})=F_\e(I)+\frac{1}{\tau}D_\e(I,I_{n-1}).\label{n-1}
\end{eqnarray} 
The following lemma describes the relevant properties of the minimizers of $(\ref{n-1})$. 
\begin{lemma}\label{prop}
For each $n$, if $I_n$ is a minimizer of {\rm(\ref{n-1})} then 
\begin{enumerate}
\item[{\rm a}.] $I_n\subseteq I_{n-1}$; 
\item[{\rm b}.] there exist a discrete coordinate rectangle $R_n$ with the four vertices in $(2\ZZ)^2$ and a set $W_n\subset (2\ZZ)^2$ 
such that $I_n=R_n\cup W_n$.
\end{enumerate}
\end{lemma}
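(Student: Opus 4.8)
The plan is to argue by induction on $n$, the base case $n=0$ being the hypothesis that $I_0$ is a coordinate rectangle with vertices in $(2\ZZ)^2$ (so $R_0=I_0$, $W_0=\emptyset$). Assume $I_{n-1}=R_{n-1}\cup W_{n-1}$ has the asserted form and abbreviate $P=I_{n-1}$ and $E(\cdot)=E_\e(\cdot,I_{n-1})$. The algebraic engine is a pair of lattice modularity relations. First, $F_\e$ in (\ref{def-energia}) is a weighted cut functional $F_\e(I)=\sum_{\{i,j\}}w_{ij}\,\mathbf 1[\text{exactly one of }i,j\in I]$ with $w_{ij}\in\{\e\beta,\e^2\alpha\}$, and each single-bond cut indicator is submodular, so $F_\e(I\cap P)+F_\e(I\cup P)\le F_\e(I)+F_\e(P)$. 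Second, splitting the two sums in (\ref{def-dist}) according to whether a site lies in $I\setminus P$ or in $P\setminus I$ gives the exact identity $D_\e(I\cap P,P)+D_\e(I\cup P,P)=D_\e(I,P)$, since both sides refer to the same $P$. Adding these, and using $E(P)=F_\e(P)$ because $D_\e(P,P)=0$, I obtain $E(I\cap P)+E(I\cup P)\le E(I)+E(P)$ for every $I$.

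To prove a, I apply this to a minimizer $I=I_n$: minimality gives $E(I_n)\le E(I_n\cap P)$, and subtracting yields $E(I_n\cup P)\le E(P)$. I then invoke the geometric fact that enlarging $P=R_{n-1}\cup W_{n-1}$ cannot decrease $F_\e$, i.e. $F_\e(Q)\ge F_\e(P)$ for all $Q\supseteq P$. The reason is the separation of the two scales: since $\e\beta\gg\e^2\alpha$, any site whose addition lowered $F_\e$ would need a majority of \emph{strong} bonds into $P$; but strong bonds join sites sharing an odd coordinate, which lie on the strong skeleton and hence in $R_{n-1}$, and both the convexity and the even-vertex parity of $R_{n-1}$ forbid an exterior site from having its strong neighbours inside $P$ without itself belonging to $R_{n-1}$. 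The residual purely-weak cases (an even--even site with three weak neighbours, or a balanced-strong site with two weak neighbours in $P$) are excluded the same way, those neighbours again forcing the site into the rectangle by the even-vertex parity. Granting this, $E(I_n\cup P)\ge F_\e(P)+\tfrac1\tau D_\e(I_n\cup P,P)=E(P)+\tfrac1\tau D_\e(I_n\cup P,P)$, so $E(I_n\cup P)\le E(P)$ forces $D_\e(I_n\cup P,P)=0$, whence $I_n\cup P=P$, that is $I_n\subseteq I_{n-1}$.

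For b I now use $I_n\subseteq I_{n-1}$. First I establish coordinate convexity of the strong part: if an odd row or odd column of $I_n$ had a gap, filling it keeps the set inside $I_{n-1}$ (the endpoints lie in $R_{n-1}$, hence so does the connecting segment), the filled sites move from $P\setminus I_n$ into $I_n$ so $D_\e$ strictly decreases, and the monotone rearrangement lowers the dominant strong crystalline perimeter, contradicting minimality. Coordinate convexity in both strong directions, together with the fact that the leading $O(\e\beta)$ energy is a square-lattice crystalline perimeter whose optimal shapes are rectangles, forces the bulky part (the sites joined to the strong skeleton) to be a coordinate rectangle $R_n$, whose sides fall between consecutive strong lines, i.e. at even coordinates, so its vertices lie in $(2\ZZ)^2$; this parity is also propagated from the even-vertex datum. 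Finally, any site of $I_n\setminus R_n$ carrying a strong bond would be a dangling piece of skeleton whose deletion lowers the strong perimeter while keeping $I_n\subseteq I_{n-1}$, hence is excluded at a minimizer; so every leftover site has only weak bonds, i.e. is an even--even site, giving $W_n:=I_n\setminus R_n\subseteq(2\ZZ)^2$ and $I_n=R_n\cup W_n$.

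The main obstacle is the rectangle characterization in b: upgrading ``coordinate convexity of the strong sections'' to ``the bulk is a genuine coordinate rectangle with vertices in $(2\ZZ)^2$'' requires an exchange argument that cleanly separates the $O(\e\beta)$ strong contributions from the $O(\e^2\alpha)$ weak ones while simultaneously tracking the distance-weighted dissipation, and it must run uniformly in $\e$ and $\tau$; in particular in the pinned regime, where $\e^3/\tau$ is so large that no nontrivial move is admissible, one reads off $R_n=R_{n-1}$ and $W_n=W_{n-1}$ directly. A secondary obstacle is the superset minimality $F_\e(Q)\ge F_\e(P)$ used in a: it must be verified not only for single-site additions but for arbitrary $Q\supseteq P$, e.g. by checking that no intermediate enlargement of the decorated rectangle ever opens a strong-dominated notch, the scale separation $\e\beta\gg\e^2\alpha$ together with the even-vertex parity being exactly what makes this robust.
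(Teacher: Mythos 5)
Your algebraic skeleton for part a is correct and genuinely different from the paper's argument: the cut functional $F_\e$ is indeed submodular, the splitting $D_\e(I\cap P,P)+D_\e(I\cup P,P)=D_\e(I,P)$ is an exact identity, and together they give $E(I_n\cup P)\le E(P)$ at a minimizer. But the bridge you then need, $F_\e(Q)\ge F_\e(P)$ for \emph{every} $Q\supseteq P=R_{n-1}\cup W_{n-1}$, is false in general, and your justification (single-site parity counting of strong bonds) cannot detect why: the failure is collective, not local. A field of weak islands has bulk energy $\alpha$ per unit of scaled area ($4\alpha\e^2$ per $2\e\times2\e$ cell), while capping a region by a full boundary costs only about $\beta/2$ per unit of scaled boundary length; so if $P$ contains islands spread over a region whose linear dimensions exceed order $\beta/\alpha$, filling that region \emph{strictly decreases} $F_\e$. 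This is not a pathology — it is exactly the mushy-layer/loss-of-coerciveness mechanism the paper is about (cf.\ the non-commutability remark). What actually protects the lemma is the dissipation, which your reduction has discarded by the time you invoke monotonicity: each refilled site pays at least $\e^3/\tau$, and in the regime $4\alpha\gamma<1$ where islands persist this beats the per-site weak gain of order $4\alpha\e^2$. One could try to salvage your claim by showing that the $P$'s actually reachable by the iteration only carry islands in frames of transverse size below $\lambda_c<\beta/\alpha$, but that quantitative information is derived in Sections 4--6 \emph{using} Lemma \ref{prop}, so your induction would be circular. The paper avoids all of this by never separating $F_\e$ from $D_\e$: every comparison is a dissipation-inclusive local modification (deleting a connected component meeting $\ZZ^2\setminus(2\ZZ)^2$, with the counting bound $\#(I\cap(2\ZZ)^2)\le\#(I\setminus(2\ZZ)^2)+1$ controlling the weak gain against the strong cost and the $\e^3/\tau$ term).

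Part b has two further gaps. First, coordinate convexity is a per-component statement: two disjoint rectangles satisfy everything you prove, so you never establish that the bulky part is a \emph{single} rectangle; the paper closes this by showing the center of $R_{n-1}$ belongs to any component meeting the odd lattice (via row/column-addition comparisons), forcing uniqueness. Second, the even-vertex parity is not visible to the perimeter alone, contrary to your ``sides fall between consecutive strong lines'' heuristic: the strong/weak count along a vertical side of a rectangle depends only on the parities of the row range, not on the abscissa of the side, so the crystalline energy does not by itself penalize an odd end. Parity is enforced by dissipation: minimality of $I_n$ forces the retained odd boundary column to carry a large dissipation distance ($\sum_i \dist_\infty((a,i),c(I_{n-1}))\ge 2\beta\tau\e^{-2}$, else deleting it would be favorable), whence adding the adjacent even column gains dissipation far exceeding its weak cost $2\alpha\e^2$ — the paper's computation ending in $2\alpha\e^2-\beta\e+\e^3/\tau<0$. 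Incidentally, for the rectangle characterization itself the paper's route is simpler than your rearrangement: replace a component $C\subseteq R_{n-1}$ by its bounding rectangle $R(C)$, which lies in $R_{n-1}$, so it decreases the dissipation and satisfies $F_\e(R(C))\le F_\e(C)$ simultaneously, with no need to separate the $O(\e\beta)$ and $O(\e^2\alpha)$ scales while tracking $D_\e$.
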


This lemma (whose proof is postponed to the next section) shows that the discrete evolution of the sets $I^{\e,\tau}_n=\e I_n$ that we are interested in is described by a {\em bulky part} governed by the coordinate rectangles $R^{\e,\tau}_n=\e R_n$, and a {\em mushy layer} composed of {\em weak islands} given by $W^{\e,\tau}_n=\e W_n$.

A relevant parameter in the description of the limit evolution is the {\em ratio of time and space scales}
\begin{equation}\label{gammadef}
\gamma=\lim_{\e\to 0}{\tau\over\e},
\end{equation}
where it is understood that $\tau=\tau(\e)$.

We will show that the asymptotic description of the sets $W^{\e,\tau}_n$ is not necessary 
to characterize the limit. Indeed

$\bullet$ if $4\alpha\gamma<1$ then $W_n=((2\ZZ)^2\cap I_0)\setminus R_n$; i.e., the mushy layer always contains all the weak sites in the initial datum $I_0$;

$\bullet$ if $4\alpha\gamma>1$ then $W_n$ is contained in $R_{n-1}$; i.e., the mushy layer at time step $n-1$ disappear at the next step.

The case $4\alpha\gamma=1$ is exceptional, as in this case it may be equivalent in terms of the balance between energy and dissipation to maintain weak islands or `dissipate' them. 

In any case, the relevant asymptotic description is given by the following result, whose proof is the content of the rest of the paper. In the description we do not treat in detail some non-uniqueness cases highlighted by the discontinuous right-hand side of the ODE in (\ref{molo}), which anyhow are completely analogous to those dealt with in \cite{BGN}.

\begin{theorem} Let $\e$ and $\tau$ be fixed and let $R_0$ be a given rectangle with the length  of the horizontal side $L^1_0$ and  length  of the vertical side $L^2_0$. Let $I_0$ be the 
greatest discrete coordinate rectangle with the four vertices in $(2\ZZ^2)$ contained in ${1\over\e}R_0$
Let $R_n$ be the sequence of rectangles of $\ZZ^2$ constructed by successive minimization as in Lemma {\rm\ref{prop}}. We define $L^1_n$ and $L^2_n$ as the lengths of the horizontal and vertical sides of $R_n$, respectively, and $L^1_{\e,\tau}(t)= \e L^1_{\lfloor t/\tau\rfloor}$ and  $L^2_{\e,\tau}(t)= \e L^2_{\lfloor t/\tau\rfloor}$. Let $\e$ and $\tau$ tend to $0$ and {\rm(\ref{gammadef})} be satisfied; then $L^1_{\e,\tau}(t)$ and  $L^2_{\e,\tau}(t)$ tend to $L_1(t)$ and  $L_2(t)$, respectively, satisfying $L_1(0)= L^1_0$ and
\begin{equation}\label{molo}
L'_1(t)=-\frac{4}{\gamma}
\Big\lfloor\max\Bigl\{\frac{2\beta\gamma}{3L_2(t)}-\frac{2\alpha\gamma}{3}+\frac{1}{6},
\frac{\beta\gamma}{2L_2(t)}+\frac{1}{4}\Bigr\}\Big\rfloor
\end{equation}
for almost every $t$.
In particular, we have {\em pinning} (no variation of $L_1$) if
\begin{eqnarray}\label{molopin}
L_2(t)>{4\beta\gamma\over {4\alpha\gamma}+5} && \hbox{ if $4\alpha\gamma<1$, or}\\
L_2(t)>{2\beta\gamma\over 3} && \hbox{ if $4\alpha\gamma>1$,}
\end{eqnarray}
respectively.
The analog description holds for $L_2$.
\end{theorem}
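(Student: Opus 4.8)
The plan is to reduce the two-dimensional problem to a one-dimensional analysis of how a single side of the rectangle $R_n$ recedes at each time step, then pass to the continuum limit. By Lemma~\ref{prop}, each minimizer $I_n$ consists of a coordinate rectangle $R_n$ together with a collection of weak islands $W_n$, and $I_n\subseteq I_{n-1}$; so the evolution is entirely \emph{contracting} and is governed by the integer amount by which each of the four sides moves inward. First I would fix the side lengths $L^1_{n-1},L^2_{n-1}$ of $R_{n-1}$ and compute, for a trial inward displacement of (say) the two horizontal sides by an integer multiple of $2$ (the vertices live in $(2\ZZ)^2$), the exact change in the energy $E_\e(I,I_{n-1})=F_\e(I)+\frac1\tau D_\e(I,I_{n-1})$. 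The perimeter term $F_\e$ decreases when a side shortens but this interacts with whether the removed sites are strong or weak, while the dissipation term $\frac1\tau D_\e$ grows quadratically in the displacement because $D_\e$ measures an $L^2$-type distance to $\partial I_{n-1}$.

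The key computation is to write $E_\e$ as a function of the discrete displacements and \emph{minimize over integers}. I would treat the removed strip of width $2h$ (in lattice units) along a horizontal side of length $L^1$: its energetic cost/gain combines (i) the strong-bond perimeter change of order $\e\beta$ along the shortened side, (ii) the weak-bond contribution of order $\e^2\alpha$ scaling with the area $\sim 2h L^1$ of the strip of weak islands that is either kept as mushy layer or dissipated, and (iii) the dissipation $\frac1\tau\e^3\sum \mathrm{dist}_\infty$, which for a strip of width $2h$ sums to something of order $\frac1\tau\e^3 L^1 h^2$. The two competing alternatives described in the excerpt---keeping the weak sites as a mushy layer (cost $\sim\e^2\alpha\,\text{area}$, no dissipation) versus dissipating them (dissipation cost $\sim\frac1\tau\e^3$ per site)---are exactly what produces the two branches inside the $\max$ in~(\ref{molo}) and the threshold $4\alpha\gamma\lessgtr1$: comparing $\e^2\alpha$ against $\frac1\tau\e^3\sim\frac{\e^2}{\gamma}$ gives $4\alpha\gamma$ versus $1$ once the geometric factor $4$ from the island's distance contribution is included. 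Minimizing the resulting quadratic-in-$h$ expression over $h\in\ZZ$ yields an optimal integer displacement $h_n=\big\lfloor(\cdots)\big\rfloor$, where the argument of the floor is precisely the larger of the two affine functions of $1/L^2$ appearing in~(\ref{molo}).

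Having the discrete law $L^1_n-L^1_{n-1}=-2h_n$ (with analogous formulas for the other three sides), the next step is the passage to the limit. I would rescale: $L^1_{\e,\tau}(t)=\e L^1_{\lfloor t/\tau\rfloor}$, so one step of size $-2\e h_n$ over a time increment $\tau$ gives a difference quotient $\frac{\e L^1_n-\e L^1_{n-1}}{\tau}=-\frac{2\e}{\tau}h_n$. Using $\gamma=\lim_{\e\to0}\tau/\e$ and substituting $h_n=\big\lfloor(\cdots)\big\rfloor$ with the argument expressed through $L^2(t)$ and $\gamma$, the prefactor $-\frac{2\e}{\tau}\to-\frac{2}{\gamma}$; combined with the factor $2$ from the $(2\ZZ)^2$-spacing this produces the coefficient $-\frac4\gamma$ and the floor in~(\ref{molo}). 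I would establish equi-Lipschitz bounds on $L^1_{\e,\tau},L^2_{\e,\tau}$ (monotone decreasing with controlled velocity) to extract via Ascoli--Arzel\`a uniformly convergent subsequences, and then show the limit satisfies the ODE for a.e.\ $t$; the pinning conditions~(\ref{molopin}) are read off as the regime where the floor argument is $<1$, i.e.\ the side cannot move by even the minimal lattice step $2$, so $h_n=0$.

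The main obstacle is making the interface-displacement computation rigorous in the presence of the mushy layer: one must show that the optimal configuration really is a clean rectangle-minus-strip plus weak islands (this is Lemma~\ref{prop}), and carefully bookkeep how weak islands inherited in $W_{n-1}$ interact with the sites newly exposed when $R_{n-1}$ shrinks to $R_n$---in particular verifying that the dissipation $\mathrm{dist}_\infty(i,I^\prime)$ for islands is computed to the \emph{new} set and accounts correctly for the distinction $c(I^\prime)=\ZZ^2\setminus I^\prime$ in~(\ref{def-dist}). A further delicate point is that, as the abstract warns, convergence of the discrete motions is only \emph{weak} (the mushy layer carries energy that does not disappear in the $\Gamma$-limit), so I cannot simply invoke commutation~(ii); instead the discrete law must be derived directly by the explicit minimization above, and the two cases $4\alpha\gamma<1$ and $4\alpha\gamma>1$ must be handled separately since they select different branches of the $\max$ and thus different pinning thresholds.
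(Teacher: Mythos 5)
Your overall skeleton coincides with the paper's: Lemma~\ref{prop} reduces each time step to minimizing an explicit energy over integer inward displacements $(h,k)$ of the sides, the optimal displacement is the floor of the minimizer of a limiting quadratic, the rescaled difference quotients $-\frac{4\e}{\tau}h_n \to -\frac{4}{\gamma}\lfloor\cdot\rfloor$ give the ODE, and pinning is read off when the floor argument drops below $1$; for $4\alpha\gamma<1$ this is exactly Section~5 of the paper (uniform convergence of $f_\e$ to $P_L(h)+P_{L'}(k)$, minimizer $\lfloor m(L)+\frac12\rfloor$, threshold $\lambda_c=\frac{4\beta\gamma}{4\alpha\gamma+5}$).

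The genuine gap is in your treatment of $4\alpha\gamma>1$, where the mechanism you describe is not the correct one and would not yield (\ref{molo}). You present the two branches of the $\max$ as coming from an unconstrained binary choice per step (``keep the weak sites as mushy layer'' vs.\ ``dissipate them''), with the threshold $4\alpha\gamma\lessgtr 1$ \emph{selecting} the branch. In fact, for $4\alpha\gamma>1$ \emph{both} branches are active depending on $L_2(t)$: a weak island survives in the minimizer if and only if its $\dist_\infty$-distance from the new boundary is large enough (it must lie in $C_\e(N_{\alpha\gamma},N_{\alpha\gamma})$, with $N_{\alpha\gamma}=\lfloor(\lfloor4\alpha\gamma\rfloor+1)/2\rfloor$), so the keep/dissipate alternative is a \emph{constraint coupled to the unknown displacement itself}. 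The paper accordingly splits the minimization into three regions, $s,t\le N_{\alpha\gamma}$, $s,t\ge N_{\alpha\gamma}$, and the mixed case (Lemmas~\ref{minore}, \ref{maggiori}, \ref{medio}, \ref{medio2}). In the first region all uncovered weak sites are dissipated and the effective quadratic is $Q_l(x)=\frac{4l}{\gamma}x^2-2\bigl(2\beta-\frac{l}{\gamma}\bigr)x$ (leading coefficient $4$, because the dissipation sums over the \emph{full} strip), with clamped optimum $\lfloor\frac{\beta\gamma}{2l}+\frac14\rfloor\wedge N_{\alpha\gamma}$; in the second, islands beyond the critical distance persist and one recovers a shifted $P_l$-analysis (leading coefficient $3$), with clamped optimum $\lfloor\frac{2\beta\gamma}{3l}-\frac{2\alpha\gamma}{3}+\frac16\rfloor\vee N_{\alpha\gamma}$. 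Comparing these constrained minima produces the four-regime function $\varphi$ with thresholds $\lambda_c^\ast$, $\lambda^-$, $\lambda^+$, including a plateau $\varphi\equiv N_{\alpha\gamma}$ on $(\lambda_c^\ast,\lambda^-]$ that your lower-envelope picture cannot see; the fact that $\varphi$ can be rewritten as the single floor-of-max in (\ref{molo}) is an a posteriori identity, not the direct output of comparing two unconstrained parabola minima (in general the argmin of a lower envelope of two parabolas is not the max of the two argmins). Relatedly, your ``quadratic in $h$'' never distinguishes the leading coefficients $3$ vs.\ $4$, so the second affine function $\frac{\beta\gamma}{2L}+\frac14$ is asserted rather than derived.

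Two smaller but substantive omissions. First, the mixed region cannot be skipped: the two sides may sit in different regimes (e.g.\ $L\le\lambda_c^\ast<L'$), and the $O(\e)$ correction terms ($R$, $r$, $R_{\alpha\gamma}$ in the paper) are precisely what break ties when $m(L)+\frac12$ or $\mu(L')+\frac12$ is an integer; the paper resolves these by monotonicity of the corrections, whereas your ``minimize over integers'' is silent exactly at the critical lengths ($L=\lambda_c$, $L=\lambda^+$), which is where uniqueness and the motion-versus-pinning dichotomy in the statement are decided. Second, your heuristic for the threshold misattributes the factor $4$: it is the perimeter cost $4\alpha\e^2$ of the four weak bonds of an isolated island, weighed against the dissipation $\frac{\e^3}{\tau}\dist_\infty(i,c(I_\e))$ at the minimal admissible distance --- harmless in itself, but symptomatic of the bookkeeping the $4\alpha\gamma\ge1$ case requires and that the proposal defers.
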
 

In terms of the crystalline curvature, which for a coordinate edge is given by $\kappa={2\over L}$
($L$ being its length), equation (\ref{molo}) reads as
\begin{equation}\label{molo}
v=\frac{2}{\gamma}
\Big\lfloor\max\Bigl\{\frac{\beta\gamma\kappa}{3}-\frac{2\alpha\gamma}{3}+\frac{1}{6},
\frac{\beta\gamma\kappa}{4}+\frac{1}{4}\Bigr\}
\Big\rfloor,
\end{equation}
where $v$ is the velocity of the edge. This equation can be extended to coordinate polyrectangles and then to more general sets by approximation \cite{BGN}.

\begin{remark}\rm
Note that for $4\alpha\gamma<1$ equation (\ref{molo}) simplifies to 
\begin{equation}\label{moloch}
L'_1(t)=-\frac{4}{\gamma}
\Big\lfloor\frac{2\beta\gamma}{3L_2(t)}-\frac{2\alpha\gamma}{3}+\frac{1}{6}\Big\rfloor.
\end{equation}
\end{remark}

\begin{remark}[extreme cases]\rm
We can consider the cases $\tau<\!<\e$ and $\e<\!<\tau$ by letting $\gamma\to 0$ and $\gamma\to +\infty$, respectively. In the first case we have {\em pinning} for all initial data, and the motion is trivial. 
For $\gamma\to+\infty$  the motion of $L_1$ (and similarly that of $L_2$) is described by
\begin{equation}\label{ligain}
L'_1(t)=-\max\Bigl\{\frac{8}{3}\Bigl({\beta\over L_2(t)}-\alpha \Bigr), {2\beta\over L_2(t)}\Bigr\}.
\end{equation}
\end{remark}

\begin{remark}[a non-commutability phenomenon]\rm
In \cite{BCPS} it is shown that the discrete energies $E_\e(I)= F_\e({1\over\e}I)$ defined on subsets of $\e\ZZ^2$ $\Gamma$-converge to the crystalline energy
$$
F(A)=\int_{\partial A} {\beta\over 2}\|\nu\|_1d{\cal H}^1,
$$
whose minimizing movements give motion by crystalline mean curvature  $v={\beta\over 2}\kappa$ (see Almgren and Taylor \cite{AT}), which corresponds in the case of a rectangle to side lengths satisfying
$$
L'_1(t)=-{2\beta\over L_2(t)}.
$$
A general result in \cite{B-LN} shows that there exist a sufficiently slow time scale $\tau$ such that
the minimizing movement of a sequence of energies $E_\e$ along $\tau$ gives the minimizing movement
of the $\Gamma$-limit. This is seemingly in contrast with the result in the theorem above since for $\gamma\to +\infty$ (which corresponds to slow time scales) we have equation (\ref{ligain}).
This discrepancy is explained by the lack of equicoerciveness of the energies. Indeed
the result in \cite{B-LN} only holds if the sequence is strongly equicoercive, and 
the appearance of the mushy region exactly corresponds to a weak (and not strong) 
convergence of the evolutions.
\end{remark}

\section{Description of the structure of minimizers}
This section is devoted to the proof of Lemma \ref{prop}.
To show the result, it is useful to give a notion of {\em connectedness} for a discrete set $I\subset \ZZ^2$. 
We say that a discrete set $I\subset\ZZ^2$ is connected if 
the set $a(I)=\bigcup_{i\in I}(i+[-\frac{1}{2},\frac{1}{2}]^2)$ is connected. 
Given $I, I^\prime$ discrete sets with $I\subseteq I^\prime$, $I$ is a connected component 
of $I^\prime$ if $a(I^\prime)$ is a connected component of $a(I)$. 

\begin{proof}[Proof of Lemma {\rm\ref{prop}}.] 
Note that setting $R_0=I_0$ and $W_0=\emptyset$ we have $I_0=R_0\cup W_0$.  
Suppose that such property holds for $n-1$.   

If $I_n$ is a minimizer of $(\ref{n-1})$ and $I_{n}\subset (2\ZZ)^2$, 
we can choose $R_n=\emptyset$ and $W_n=I_{n}$. 
If there exists $i\in I_{n}\setminus I_{n-1}$, 
then $E_\e(I_{n}\setminus i, I_{n-1})=E_\e(I_{n}, I_{n-1})-4\e^2\alpha-\e^3\tau^{-1}<E_\e(I_{n}, I_{n-1})$.
Since $I_n$ is a minimizer, this implies $I_{n}\subseteq I_{n-1}$ and the thesis follows. 

Now, we consider the case $I_n\setminus(2\ZZ)^2\neq\emptyset$.
Let $C$ be a connected component of $I_{n-1}$ 
such that $C\setminus (2\ZZ)^2\neq\emptyset$. 
We show by contradiction that $C\subseteq R_{n-1}$. 
Since in a connected set $I\subset\ZZ^2$ 
we have $\# (I\cap(2\ZZ)^2)\leq\# (I\setminus(2\ZZ)^2)+1$, 
setting $\tilde I_n=I_n\setminus (C\setminus R_{n-1})$ we get  
$$E_\e(\tilde I_n,I_{n-1})-E_\e(I_n,I_{n-1})<-\frac{\e^3}{\tau}-2\beta\e $$
which is negative for $\e$ small enough. 
Hence, $C\subseteq R_{n-1}$. 
Denoting by $R(C)$ the minimal discrete coordinate rectangle 
including $C$, since $F_\e(R(C))\leq F_\e(C)$, necessarily 
$C=R(C)$ and $C=([a,\overline a]\times[b,\overline b])\cap\ZZ^2$ 
for some $a,\overline a,b,\overline b\in\ZZ$. 
We show that the four vertices of $C$ belong to $(2\ZZ)^2$. 
Reasoning by contradiction, it is not restrictive to assume 
$C=([a,\overline a]\times [b,\overline b])\cap \ZZ^2$ with 
$a\not\in 2\ZZ$. Note that, setting $N=\#([b,\overline b]\cap\ZZ)$, 
we have $\sum_{i=b}^{\overline b}d((a,i),c(I_{n-1})) \geq 2N_\e-2.$
Since $\sum_{i=b}^{\overline b}d((a,i),c(I_{n-1}))\geq 2\beta\tau\e^{-2},$ 
setting $\tilde I_n=I_n\cup (\{a-1\}\times([b,\overline b]\cap\ZZ))$ 
it follows that 
\begin{eqnarray*}
E_\e(\tilde I_n, I_{n-1})-E_\e(I_n, I_{n-1})&\leq& 2\alpha\e^2-\frac{\e^3}{\tau}
\sum_{i=b}^{\overline b}d((a-1,i),c(I_{n-1}))\\
&\leq & 2\alpha\e^2-\frac{\e^3}{2\tau} 
\sum_{i=b}^{\overline b}d((a,i),c(I_{n-1})) +\frac{\e^3}{\tau} \\
&\leq & 2\alpha\e^2-\beta\e +\frac{\e^3}{\tau}
\end{eqnarray*} 
which is strictly negative for $\e$ small enough. 

Now, we show that the connected component 
$C=([a,\overline a]\times[b,\overline b])\cap \ZZ^2$ is the unique connected component 
intersecting $\ZZ^2\setminus(2\ZZ)^2$. 
We prove this by showing that the center $(n_1,n_2)$ of $R_{n-1}$ belongs to $C$. 
Since $C\setminus (2\ZZ)^2\neq\emptyset$, it is not restrictive to assume $b<\overline b$. 
If $\overline a<n_1$, we consider 
the set $\tilde I_{n}=I_n\cup (\{\overline a+1\} \times ([b,\overline b])\cap \ZZ)$. 
Since 
$$\e^3\tau^{-1}\sum_{i=b}^{\overline b} d((\overline a,i), c(I_n))\geq 
2 \e\beta + 6\e^2\alpha,$$
we get  
\begin{eqnarray*}
E_\e(\tilde I_{n}, I_{n-1})-E_\e(I_{n}, I_{n-1})&\leq& -\e^3\tau^{-1}\sum_{i=b}^{\overline b} d((\overline a,i), c(I_n))
+2\e\beta \\
&\leq& - 12\e^2\alpha < 0.
\end{eqnarray*}
The same argument holds for $a>n_1$. Hence, 
$a\leq n_1\leq \overline a$. Assume by contradiction that $\overline b<n_2$, and define 
$\tilde I_n=I_n\cup (([a,\overline a]\cap\ZZ)\times([\overline b+1, n_2]\cap\ZZ))$ 
(if $n_2\not\in 2\ZZ$, we substitute $n_2$ by $n_2+1$). 
Since 
$$\frac{\e^3}{\tau}\sum_{i=a}^{\overline a} \big(d((i, \overline b), c(I_n))+d((i, \overline b-1), c(I_n))\big)
 \geq 
2 \e\beta + 2\e^2\alpha,$$
we get, recalling that $d((n_1, n_2), c(I_n))<d((n_1, \overline b), c(I_n))$,   
\begin{eqnarray*}
E_\e(\tilde I_{n}, I_{n-1})-E_\e(I_{n}, I_{n-1})&<& 
-K\frac{\e^3}{\tau}\sum_{i=a}^{\overline a} \big(d((i, \overline b), c(I_n))+d((i, \overline b-1), c(I_n))\big)\\
&&+2K\e\beta +2K\e^2\alpha\\
&\leq& 0 
\end{eqnarray*}
for some $K>0$. 
Hence, $\overline b\geq n_2$. The same argument shows that $b\leq n_2$, and the claim is proved. 
Since $I_n$ cannot contain isolated points in $(2\ZZ)^2$ which are not in $I_{n-1},$ the proof is complete.  
\end{proof}

\section{The iteration procedure}\label{proc}
Given $L,L^\prime>0$ we define 
$$I_\e=([0, i_\e(L)]\times [0,  i_\e(L^\prime)]) \cap \mathbb N^2$$
where $i_\e(x)$ denotes for any $x>0$ the greater even integer 
less than $\lfloor \frac{x}{\e} \rfloor$;  
that is,
\begin{equation} \label{ie}
i_\e(x)=2\Big\lfloor\frac{\lfloor x/\e \rfloor}{2}\Big\rfloor. 
\end{equation}
Setting $I_\e^0=I_\e$, for $n\geq 1$ we denote by 
$I_\e^n$ a minimum point for the energy 
\begin{equation}\label{energia}
E_\e(I,I^{n-1}_\e)=F_\e(I)+\frac{1}{\tau}D_\e(I,I^{n-1}_\e).  
\end{equation}  
Setting for $(h,k)\in [0, i_\e(L^\prime)/4]\times [0, i_\e(L)/4]$ 
\begin{equation}\label{def-c} 
C_\e(h,k)=[2h, i_\e(L^\prime)-2h]\times[2k, i_\e(L)-2k], 
\end{equation} 
Lemma \ref{prop} ensures that 
if $I$ is a minimizer of (\ref{energia}) then there exist 
$(h^n_\e,k^n_\e)\in ([0, i_\e(L^\prime)/4]\times [0, i_\e(L)/4])\cap \mathbb N^2$ and 
$W^n_\e\subset \mathbb N^2$ with 
$C_\e(h^n_\e,k^n_\e)\cap \mathbb N^2\subseteq W^n_\e \subseteq I_\e$ 
such that 
\begin{equation}\nonumber 
I=I_\e(h^n_\e,k^n_\e,W^n_\e)=(C_\e(h^n_\e,k^n_\e) \cap \mathbb N^2)
\cup (W^n_\e\cap \mathbb (2 \mathbb N)^2).
\end{equation} 
Moreover, $h^n_\e\geq h_\e^{n-1}$, $k^n_\e\geq k_\e^{n-1}$ and $W^n_\e\subseteq W^{n-1}_\e$. 

In the following sections, we show that, up to a subset of $(2\mathbb N)^2$, the minimum problem 
for the energy $E_\e(I,I_\e)$ has a unique solution 
$$I_\e^1=(C_\e(h^1_\e,k^1_\e) \cap \mathbb N^2)
\cup (W^1_\e\cap \mathbb (2\mathbb N)^2)$$
with $h^1_\e$ and $k^1_\e$ independent of $\e$ for $\e$ small enough. 
Hence, the corresponding result holds for $I_\e^n$.   
In particular, we show that 
if the initial length $L$ of an edge is lower than 
a critical threshold then 
the corresponding ``displacement'' $h$ depends on the initial length as follows: 
$$h=h(L)=\Big\lfloor\frac{2\beta\gamma}{3L}-\frac{2\alpha\gamma}{3}+\frac{1}{6}\Big\rfloor.$$  
  
As to the set $W_\e^1$ of the {\em weak islands} (i.e., points with weak connections), 
note that the variation of the energy 
due to an isolated set $\{i\}\subset (2\mathbb N)^2$ 
is given by $$4\alpha\e^2-\frac{\e^3}{\tau}\dist_\infty(i, C(I_\e)).$$
Hence  we may rule out the formation of weak islands 
only if 
$4\alpha-\frac{\e}{\tau}\geq 0,$ that is if 
\begin{equation} 
4\alpha\gamma\geq 1. 
\end{equation}
Thus, we consider two different cases in dependence of the value of $\alpha\gamma$. 

\section{Case $4\alpha\gamma<1$} 
Recalling the properties shown in the previous section, 
it follows that in the minimum problem for the energy $E_\e(I,I_\e)$ defined in 
(\ref{energia}) 
we can consider only sets of the form 
\begin{equation}\label{setsie} 
I_\e(h,k)= 
\big(C_\e(h,k)\cap \mathbb N^2 \big)
\cup \big(I_\e \cap (2\mathbb N)^2\big) 
\end{equation} 
for $(h,k)\in ([0,\frac{L^\prime}{4\e}]\times [0,\frac{L}{4\e}])\cap \mathbb N^2,$ 
and the minimum problem for $E_\e$ corresponds to minimize 
\begin{equation} \nonumber 
f_\e(h,k)
=\frac{1}{\e}\big(F_\e(I_\e(h,k))-F_\e(I_\e)+\frac{1}{\tau}D_\e(I_\e(h,k), I_\e)\big).
\end{equation} 
Since we are interested in $\e\to 0$, it is not restrictive to consider the case when 
\begin{equation}\label{condo}
\e=\frac{\min\{L,L^\prime\}}{2n}\hbox{ for $n\in\mathbb N$ large enough}
\end{equation} 
(hence $i_\e(L)=\frac{L}{\e}$ if $L\leq L^\prime$ or $i_\e(L^\prime)=\frac{L^\prime}{\e}$ 
if $L\geq L^\prime$). We will then make this assumption, commenting on 
the error that we make under this hypothesis when necessary.

In the sequel of the section, we prove the following result. 
\begin{theorem} \label{teo-minimo} 
For $\e$ small enough, the energy $E_\e(I_\e(h,k),I_\e)$ 
has a unique minimum point in $([0,\frac{L^\prime}{4\e}]\times [0,\frac{L}{4\e}]) \cap \mathbb N^2$  
given by 
\begin{equation} \nonumber 
\left\{ \begin{array}{ll} \vspace{1mm}
(0,0) &  \hbox{ if } \min\{L,L^\prime\}=\lambda_c < \max\{L,L^\prime\}\\
(n(L),n(L^\prime)) &  \hbox{ otherwise }  
\end{array} 
\right.
\end{equation} 
where for any $l>0$ 
\begin{equation} 
\label{def-n}
n(l)=\left\{ \begin{array}{ll} \vspace{1mm}
0 &  \hbox{ if } l >\lambda_c \\
\big\lfloor\frac{2\beta\gamma}{3l}-\frac{2\alpha\gamma}{3}+\frac{1}{6}\big\rfloor &  \hbox{ if }  l\leq \lambda_c 
\end{array} 
\right.
\end{equation} 
and the critical threshold $\lambda_c$ is given by 
\begin{equation}\label{soglia-critica}
\lambda_c=\frac{4\beta\gamma}{4\alpha\gamma+5}. 
\end{equation}
\end{theorem}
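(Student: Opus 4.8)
The plan is to invoke Lemma~\ref{prop}, in the refined form stated at the beginning of Section~\ref{proc}, to reduce the minimization of $E_\e(\,\cdot\,,I_\e)$ to the finite lattice minimization of $f_\e(h,k)$ over $([0,\frac{L'}{4\e}]\times[0,\frac L{4\e}])\cap\mathbb N^2$. I would then show that, as $\e\to0$, $f_\e$ converges to an explicit strictly convex function that is \emph{separable} in $h$ and $k$, and minimize the two resulting one-dimensional problems by hand; the stated value $(n(L),n(L'))$ and threshold $\lambda_c$ should drop out of this computation, and uniqueness should follow from strict convexity together with a stability argument for small $\e$.

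The core is the expansion of $f_\e(h,k)$ into three contributions. First, the strong perimeter: passing from $I_\e$ to the eroded rectangle $C_\e(h,k)$ shortens the two horizontal edges by about $2h$ strong bonds each and the two vertical edges by about $2k$ each, so that $\frac1\e$ times the strong part tends to $-4\beta(h+k)$. Second, the weak islands: in the case $4\alpha\gamma<1$ every even-even site stranded in the removed frame becomes isolated and pays $4\e^2\alpha$; counting the even columns of the two vertical strips (height $\sim L/\e$) and the even rows of the two horizontal strips (width $\sim L'/\e$) gives $\frac1\e$ times the weak part tending to $4\alpha(hL+kL')$. Third, the dissipation: since $I_\e(h,k)\subseteq I_\e$, only the first sum in (\ref{def-dist}) survives, and $\dist_\infty(i,c(I_\e))$ is the depth of the removed site in the original rectangle; summing $c+1$ over the removed sites of each column (a removed column contributes $H$ points if odd and $\frac H2$ if even, with $H=i_\e(L)\sim L/\e$) yields $\sum_i\dist_\infty\approx H(3h^2+2h)$ for the two vertical strips, hence $\frac1{\e\tau}D_\e\to\frac1\gamma\big(L(3h^2+2h)+L'(3k^2+2k)\big)$. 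Collecting the three pieces, $f_\e(h,k)\to\Phi(h,k)=\psi_L(h)+\psi_{L'}(k)$ with $\psi_l(m)=(-4\beta+4\alpha l)m+\frac l\gamma(3m^2+2m)$.

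Minimizing $\Phi$ reduces to minimizing each $\psi_l$ over $m\in\mathbb N$. The first difference is $\psi_l(m)-\psi_l(m-1)=(-4\beta+4\alpha l)+\frac l\gamma(6m-1)$ and the second difference is the constant $\frac{6l}\gamma>0$, so $\psi_l$ is strictly convex and its unique integer minimizer is the largest $m$ with $\psi_l(m)-\psi_l(m-1)\le0$, i.e. $m=\big\lfloor\frac{2\beta\gamma}{3l}-\frac{2\alpha\gamma}3+\frac16\big\rfloor=n(l)$ as in (\ref{def-n}); this is $0$ exactly when $l>\lambda_c$, where $\lambda_c=\frac{4\beta\gamma}{4\alpha\gamma+5}$ is obtained by solving $\psi_l(1)-\psi_l(0)=0$, matching (\ref{soglia-critica}). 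Separability then gives the limit minimizer $(n(L),n(L'))$.

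The final and most delicate step is to transfer this to the prelimit and to prove uniqueness for $\e$ small. Away from the threshold the minimizing inequalities $\psi_l(n(l)\pm1)>\psi_l(n(l))$ are strict with a gap of order $1$, so the $o(1)$ corrections to $f_\e$ cannot displace the integer minimizer; here I must only bound these corrections uniformly, using the exact-fit hypothesis (\ref{condo}) to remove the floor error in $i_\e$ and the fact that the two frames overlap in only $O(1)$ corner sites, whose perimeter and dissipation contributions to $f_\e$ are $O(\e)$. The true obstacle is the exceptional configuration $\min\{L,L'\}=\lambda_c<\max\{L,L'\}$: there $\Phi$ has a genuine tie between $m=0$ and $m=1$ in the critical variable, so the selection of $(0,0)$ is invisible to $\Phi$ and must be decided by the sign of the subleading term of $f_\e(1,\cdot)-f_\e(0,\cdot)$. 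Carrying out this refined finite-$\e$ computation (exact under (\ref{condo})), checking that it is positive so as to select pinning in that variable while all other threshold configurations fall under the value $(n(L),n(L'))$, is the heart of the argument.
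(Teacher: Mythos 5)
Your plan is, in outline, the paper's own proof: the reduction via Lemma \ref{prop} to sets of the form (\ref{setsie}), the three-term expansion of $f_\e$ (your $\psi_l$ coincides exactly with $P_l$ in (\ref{def-PL})), the uniform convergence to the separable quadratic $P_L(h)+P_{L'}(k)$, and the first-difference computation yielding $n(l)$ and $\lambda_c$ as in (\ref{def-n}) and (\ref{soglia-critica}) are all verified correct and match the paper's calculations. But two steps you dismiss as routine are genuine gaps. First, you cannot ``bound the corrections uniformly'' over the admissible set: the correction to the separable limit is $\frac{\e}{\gamma}R(h,k)$ with $R$ the cubic in (\ref{def-R}), which at the scale $h\sim L/(4\e)$ permitted by the minimization domain is of the same order $\e^{-2}$ as the main term, so the convergence $f_\e\to f$ is only locally uniform. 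The paper must therefore first prove an a priori localization --- estimate (\ref{h-grande}) excluding $h>L/(4\e)$ when $L<L'$, the ordering inequality $f_\e(x,y)>f_\e(y,x)$ for $0<x<y$, and Proposition \ref{limitato} confining minimizers to a fixed square $[0,\overline x]^2$ independent of $\e$ --- and your sketch offers no substitute for this step beyond the phrase ``stability argument.''

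Second, and decisively: the tie-breaking, which you yourself call the heart of the argument, is deferred rather than done, and the one-variable form in which you set it up (``the sign of the subleading term of $f_\e(1,\cdot)-f_\e(0,\cdot)$ \dots\ in that variable'') would misclassify the double-critical case $L=L'=\lambda_c$. There the theorem asserts the minimizer is $(n(\lambda_c),n(\lambda_c))=(1,1)$, yet the coordinate-wise differences you propose to examine all point to pinning: from the paper's exact formula, $f_\e(1,0)=\frac{\e}{\gamma}R(1,0)=4\e\alpha>0=f_\e(0,0)$, and likewise $f_\e(0,1)>f_\e(0,0)$; the true minimizer $(1,1)$ wins only through the non-separable interaction terms of $R$, since $R(1,1)=-8\alpha\gamma-16<0$ gives $f_\e(1,1)<0$. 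The paper handles this by directly comparing the four values $f_\e(0,0)$, $f_\e(1,0)$, $f_\e(0,1)$, $f_\e(1,1)$, and resolves the remaining ties (when $m(L)+\frac12$ or $m(L')+\frac12$ is an integer $\geq 2$) by the monotonicity of $R(x,y)-\varrho_\e\pi(y)$ in each variable on $\{x,y\geq 1\}$ together with $R(x,0)>R(x,1)-\varrho_\e\pi(1)$, which selects the \emph{larger} integer --- the opposite selection from the case $l=\lambda_c$, where pinning is chosen. None of this is visible in the separable limit $\Phi$, so until you actually compute the correction $R$ (not merely note it is $O(\e)$ at the corners) and run these sign and monotonicity comparisons, precisely the cases the theorem's dichotomy is about --- and the uniqueness claim at the thresholds --- remain unproved, and your scheme, applied literally variable by variable, gives the wrong minimizer at $L=L'=\lambda_c$.
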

\begin{proof}
Suppose $L^\prime\geq L$. 
First, we note that, in the case $L^\prime>L$, if $4\e h > L$ 
then the variation of the energy is strictly positive (for $\e$ small enough), 
so that the minimum point does not belong to this set and 
\begin{equation}\label{h-grande}
f_\e(h,k)>
\min_{[0,\frac{L}{4\e}]^2 \cap \mathbb N^2} f_\e 
\quad\ \ \hbox{ if } h> \frac{L}{4\e}.
\end{equation} 
Indeed, the dissipation term turns out to be larger than  
\begin{eqnarray*}
&&\frac{4\e^3}{\tau}\Biggl(\sum_{j=1}^{\lfloor L/(2\e)\rfloor }\biggl(\sum_{l=1}^j l 
+ j\Big(\Big\lfloor \frac{L}{2\e}\Big\rfloor-j\Big)\biggr) \\
&&
\qquad\qquad-\sum_{j=1}^{\lfloor L/(4\e)\rfloor}\biggl(\sum_{l=1}^j (2l-1) + (2j-1)\Big(\Big\lfloor\frac{L}{4\e}\Big\rfloor-j\Big)\biggr) 
\Biggr)\nonumber 
\end{eqnarray*}
and thus, up to a uniformly bounded term, larger than $\frac{L^3}{8\e\gamma}$. 
Since the variation of the boundary energy $F_\e$ is uniformly bounded, 
for $\e$ sufficiently small it follows that 
\begin{equation}
f_\e(h,k)\geq \e^{-2}C
\ \ \ \hbox{ if } \ L< 4\e h \leq L^\prime 
\nonumber 
\end{equation}
where $C>0$ is independent of $h,k,\e$, and (\ref{h-grande}) follows. 

Now, we consider $(h,k)\in [0,\frac{L}{4\e}]^2 \cap \mathbb N^2.$ 
Recalling that by assumption $\frac{L^\prime}{\e}\in 2\mathbb N$, 
the variation of the boundary energy is given by  
\begin{equation}\label{variazione-bordo}
\left. \begin{array}{ll}
\vspace{2mm}
F_\e(I_\e(h,k))-F_\e(I_\e)=&
4\e(-\beta h +\alpha Lh -\beta k + \alpha L^\prime k)\\
&+4\alpha\e^2( h+ k-4 hk) 
- 4\alpha\e^2 \varrho_\e k
\end{array}
\right.
\end{equation} 
where $\varrho_\e=\frac{L^\prime}{\e}-i_\e(L^\prime)\in [0,2)$. 
Note that if we do not assume (\ref{condo}) then we
have an additional term  $-4\alpha\e^2 \varrho_\e^L h$ with 
$\varrho_\e^L=\frac{L}{\e}-i_\e(L)$

As for the dissipation term, setting $M=\max\{h,k\}$ we have  
\begin{eqnarray}
&&D(I_{\e}(h,k),I_\e)=2 \e^3\Big(\Big(\frac{L}{\e}+1-4M\Big)
\sum_{j=1}^{2h}j-\Big(\frac{L}{2\e}+1-2M\Big)\sum_{j=1}^{h}(2j-1)\Big)\nonumber \\
&&\hspace{1cm}+\displaystyle 2\e^3 \Big(
(i_\e(L^\prime)+1-4M)\sum_{j=1}^{2k}j
-\Big(\frac{i_\e(L^\prime)}{2}+1-2M\Big)\sum_{j=1}^{k}(2j-1)\Big)
\nonumber \\
&&\hspace{1cm}+4\e^3 \Big( \sum_{j=1}^{2M}\big(\sum_{l=1}^j l +j(2M-j)\big) 
-  \sum_{j=1}^{M}\big(\sum_{l=1}^j (2l-1) +(2j-1)(M-j)\big) 
\Big).\nonumber 
\end{eqnarray}
Hence, 
\begin{eqnarray}
\frac{1}{\tau}D(I_{\e}(h,k),I_\e)&=&\displaystyle \frac{\e^3}{\tau} 
\Big(\frac{L}{\e}h(3h+2)+2h^2+2h
-12h^2M-8hM\Big)
\nonumber \\
&&
+\frac{\e^3}{\tau} \Big( \frac{L^\prime}{\e}k(3k+2) +2k^2
+2k
-12k^2M-8kM\Big)
\nonumber \\
&&
-\frac{\e^3}{\tau}
(3\varrho_\e k^2+2\varrho_\e k)
+\frac{\e^3}{\tau} (8M^3+8M^2) 
\nonumber \\
&=&\displaystyle \frac{\e}{\gamma} \big(Lh(3h+2)+ L^\prime k(3k+2)\big) 
\nonumber \\
&&
 +\frac{\e^2}{\gamma}(2h^2
 +2h
 +2k^2
 +2k-8hk
 -12M\min\{h,k\}^2-4M^3)
\nonumber \\
&&
-\frac{\e^2}{\gamma}
\varrho_\e k(3k+2)
.
\nonumber
\end{eqnarray} 
Again, if we do not assume (\ref{condo}) we obtain an additional term
$-\frac{\e^2}{\gamma}\varrho_\e^L h(3h+2)$ with
$\varrho_\e^L=\frac{L}{\e}-i_\e(L)$.

Setting 
\begin{equation}\label{def-pi}
\pi(x)=3x^2+2(2\alpha\gamma+1)x
\end{equation}
and, for $l>0$,  
\begin{equation}\label{def-PL}
P_l(x)=
\frac{l}{\gamma}\pi(x)-4\beta x
=\frac{3l}{\gamma}x^2 -2\Big(2\beta-\Big(2\alpha+\frac{1}{\gamma}\Big)l\Big)x
\end{equation}
we get \begin{equation}\label{fehk}
f_\e(h,k)
=P_L(h)+P_{L^\prime}(k)+\frac{\e}{\gamma} R(h,k) 
-\frac{\e}{\gamma} \varrho_\e\pi(k) 
\nonumber 
\end{equation}
where $R(h,k)$ is the symmetric function defined for $h\geq k$ by 
\begin{equation}\label{def-R}
\left. 
\begin{array}{ll}
R(h,k)=
\vspace{2mm}
&\displaystyle 2h^2+2(2\alpha\gamma+1)h+2k^2+2(2\alpha\gamma+1)k
\\
&-8(2\alpha\gamma+1)hk-12h k^2-4h^3.
\end{array}
\right.
\end{equation} 
Without assuming (\ref{condo}) we get an additional term
$-\frac{\e}{\gamma}
\varrho_\e^L \pi(h)$ in (\ref{fehk}).
\color{black} 
\medskip 

The sequence $\{f_\e\}$ uniformly converges   
on the compact sets of $\mathbb R^2$ to $f$ defined by 
$f(x,y)=P_L(x)+P_{L^\prime}(y).$ We 
denote by $(m(L),m(L^\prime))$ 
the minimum point of $f$ in $\mathbb R^2$, so that $m(l)$ is defined by 
\begin{equation}\label{def-mL}
m(l)=\frac{2\beta\gamma-(2\alpha\gamma+1)l}{3l}.
\end{equation}
Note that if $L=L^\prime$ the function $f_\e$ is symmetric. If $L^\prime>L$, then 
the symmetry of $R$ gives 
$f_\e(x,y)-f_\e(y,x)=(P_L-P_{L^\prime}-\frac{\e\varrho_\e}{\gamma} \pi)(x)-
(P_L-P_{L^\prime}-\frac{\e\varrho_\e}{\gamma} \pi)(y);$ 
since $P_L-P_{L^\prime}-\frac{\e\varrho_\e}{\gamma}\pi$ 
is strictly decreasing in $(0,+\infty)$ for $\e$ small enough, 
it follows that 
\begin{equation}\label{sopra-no}
f_\e(x,y)>f_\e(y,x) \quad \hbox{ if }\ 0<x<y. 
\end{equation}
Hence the minimum of $f_\e$ in $[0,\frac{L}{4\e}]^2$ is achieved in $\{(x,y): x\geq y\}$.

Now we prove 
that the minimum of $f_\e$ is in fact achieved in a compact set independent of $\e$. 
\begin{proposition}   \label{limitato}
There exists $\overline x>0$ independent of $\e$ such that, for $\e$ small enough,  
if $(x_\e,y_\e)$ is a minimum point of $f_\e$ in $[0,\frac{L}{4\e}]^2,$ 
then $(x_\e,y_\e)\in [0,\overline x]^2$. 
\end{proposition} 
\begin{proof} 
Since $f_\e$ coincides in $\{x>y\}$   
with a polynomial function of degree $3$, then if $L<L^\prime$ (hence $m(L)>m(L^\prime)$) the 
uniform convergence ensures that the (unique) critical minimum point of $f_\e$ belongs to 
a compact neighborhood of $(m(L),m(L^\prime))$ included in $\{x>y\}$, and independent of $\e$.   
In the case $L=L^\prime$, 
the computation of the partial derivatives gives 
$\nabla \!f_\e\neq (0,0)$ in 
$[0,\frac{L}{4\e}]^2\cap\{x>y\}$. 

We fix $\overline x>\max\{m(L)+1,0\}$; the minimum of $f_\e$ in $[0,\frac{L}{4\e}]^2\cap\{ x\geq \overline x, \ x\geq y\}$ 
is then achieved on the boundary,  
where a computation shows that 
$$f_\e(x,y)\geq P_L(\overline x)+\min P_{L^\prime}+o(1)_{\e\to 0}.$$
Choosing $\overline x$ such that $P_L(\overline x)+\min P_{L^\prime}>0,$ 
then in $[0,\frac{L}{4\e}]^2\cap\{ x\geq \overline x, \ x\geq y\}$ we have 
$f_\e(x,y)>0=f_\e(0,0)$.  
Since $f_\e(x,y)\geq f_\e(y,x)$ if $x\leq y$, 
the thesis follows.  
\end{proof}
To prove Theorem \ref{teo-minimo}, 
we have to consider the minimum problem for $f_\e$ 
in $([0,\frac{L^\prime}{4\e}]\times[0,\frac{L}{4\e}])\cap \mathbb N^2$. Recalling $(\ref{h-grande})$, the minimum is 
achieved in $[0,\frac{L}{4\e}]^2\cap \mathbb N^2$, and 
thanks to Proposition \ref{limitato}, 
it is sufficient to show that 
$f_\e$ has a unique minimum point independent of $\e$ in $[0,\overline x]^2 \cap \mathbb N^2.$

If $f$ has a unique minimum point in $[0,\overline x]^2 \cap \mathbb N^2,$ then the uniform convergence 
ensures that, for $\e$ small enough, 
it coincides with the (unique) minimum point of $f_\e$ in $[0,\overline x]^2 \cap \mathbb N^2,$ 
concluding the proof of the uniqueness and independence on $\e$.  

The minimum of $f$ is achieved in the points $(h,k)$ in $[0,\overline x]^2 \cap \mathbb N^2$ 
which minimize 
the distance 
from $(m(L), m(L^\prime))$. 
Thus, the uniqueness fails only if $m(L)+\frac{1}{2}$ 
or $m(L^\prime)+\frac{1}{2}$ belongs to $\mathbb N\setminus\{0\}$. In this case, 
we have to compare the values of $f_\e-f$ in the set of the minimum points of $f$. 

Note that the condition $m(L)+\frac{1}{2}<1$ corresponds to $L>\lambda_c$, where $\lambda_c$ 
is the critical length defined in (\ref{soglia-critica}). 
Hence, recalling 
that for $l>0$ 
$$m(l)+\frac{1}{2}=\frac{2\beta\gamma}{3l}-\frac{2\alpha\gamma}{3}+
\frac{1}{6},$$  
if both $m(L)+\frac{1}{2}$ 
and $m(L^\prime)+\frac{1}{2}$ do not belong to $\mathbb N\setminus\{0\}$ then the 
minimum point of $f_\e$ in $[0,\overline x]^2 \cap \mathbb N^2$ is unique and it is given by 
\begin{enumerate}
\item[-] $(0,0)=(n(L), n(L^\prime))$ if $L>\lambda_c$;
\item[-] $(\lfloor m(L)+\frac{1}{2}\rfloor, 0)=(n(L), n(L^\prime))$ if 
$L<\lambda_c<L^\prime$; 
\item[-] $(\lfloor m(L)+\frac{1}{2}\rfloor, \lfloor m(L^\prime)+\frac{1}{2}\rfloor)=
(n(L), n(L^\prime))$ if $L^\prime<\lambda_c$ 
\end{enumerate} 
where the function $n$ is defined in (\ref{def-n}).

It remains to check the case when $m(L)+\frac{1}{2}$ 
or $m(L^\prime)+\frac{1}{2}$ belongs to $\mathbb N\setminus\{0\}$. 
Note that  
$R(x,y)-\varrho_\e\pi(y)$ decreases with respect to each variable in $\{x,y\geq 1\}$, and 
also $R(x,0)$ is strictly decreasing in $\{x\geq 1\}$; moreover, 
for any $x\geq 1$ then $R(x,0)>R(x,1)-\varrho_\e \pi(1)$. 
The monotonicity properties of $R(x,y)-\varrho_\e \pi(y)$ allow to deduce that 
the minimum is achieved in $(\lfloor m(L)+\frac{1}{2}\rfloor, \lfloor m(L^\prime)+\frac{1}{2}\rfloor)=
(n(L), n(L^\prime))$ 
for each $L,L^\prime$ except when $L=\lambda_c$. 
In this case, if also $L^\prime=\lambda_c$, the comparison between $f_\e(0,0),$ $f_\e(1,0),$ 
$f_\e(0,1)$ and $f_\e(1,1)$ ensures that the minimum point is $(1,1)$, again corresponding 
to $(\lfloor m(L)+\frac{1}{2}\rfloor, \lfloor m(L^\prime)+\frac{1}{2}\rfloor)$. 
If $L=\lambda_c$ and $L^\prime>\lambda_c$, 
since $f_\e(1,0)=4\e\alpha>f_\e(0,0),$ 
then 
the minimum is obtained in $(0,0)=(\lfloor m(L)-\frac{1}{2}\rfloor, \lfloor m(L^\prime)+\frac{1}{2}\rfloor)$.


This completes the proof of Theorem \ref{teo-minimo}.
\end{proof}

In particular, except for the case when one or both the initial lengths 
are greater than $\lambda_c$, 
the minimizing set for the energy $E_\e$ is 
$$I_\e(n(L),n(L^\prime))=I_\e\Big(\Big\lfloor\frac{2\beta\gamma}{3L}-\frac{2\alpha\gamma}{3}+\frac{1}{6}\Big\rfloor,
\Big\lfloor\frac{2\beta\gamma}{3L^\prime}-\frac{2\alpha\gamma}{3}+\frac{1}{6}\Big\rfloor\Big)$$
with $I_\e$ defined in (\ref{setsie}).

We can use the set $I_\e(n(L),n(L^\prime))$ as a recursive datum for the energy (\ref{energia}).
Note that the weak sites of the initial configurations will be part of each minimizers. 
We can then describe the motion through the velocity of the moving rectangle corresponding
to $C_\e(h,k)$.

 \color{black}

\color{black} 
We finally compute the velocity of a side.
Consider an edge with initial non-scaled length $L$. We proved that it moves only 
if $L<\lambda_c$ (also for $L=\lambda_c$ if $L$ is not the minimal length of the edges). 
In this case, the displacement of the edge is given by 
\begin{equation} 
2\Big\lfloor\frac{2\beta\gamma}{3L}-\frac{2\alpha\gamma}{3}+\frac{1}{6}\Big\rfloor.
\end{equation}
Hence, if $L<\lambda_c$, 
$$v_\gamma(L)=-\frac{2}{\gamma}\Big\lfloor\frac{2\beta\gamma}{3L}-\frac{2\alpha\gamma}{3}+\frac{1}{6}\Big\rfloor 
=-\frac{4\beta}{3L}+\frac{4\alpha}{3}+\frac{6r(L)-1}{3\gamma}
$$ 
where $r(L)=m(L)+\frac{1}{2}-\lfloor m(L)+\frac{1}{2} \rfloor$. 

\section{Case $4\alpha\gamma\geq 1$} 
The properties of the energy shown in Section \ref{proc} 
imply that a weak island may appear only when the distance from the boundary is strictly greater 
than $4\alpha\gamma$. 
Hence, except if $4\alpha\gamma$ is an odd integer, 
an isolated point $i\in (2\mathbb N)^2$ belongs to a minimizing set for $E_\e(I,I_\e)$  
if and only if the distance from the boundary of $I_\e$ is greater than 
the minimal odd integer larger than $\lfloor 4\alpha\gamma\rfloor$, 
which is given by $2N_{\alpha\gamma}-1$ where  
$$N_{\alpha\gamma}=\Big\lfloor\frac{\lfloor4\alpha\gamma\rfloor+1}{2}\Big\rfloor.$$ 
Recalling the definition (\ref{def-c}) 
of $C_\e(s,t)$, if we define for $(s,t)\in[0,i_\e(L^\prime)/4]\times[0,i_\e(L)/4]$
\begin{equation}\label{def-j} 
J_\e(s,t)=
\Big( C_\e(s,t) \cap \mathbb N\Big)
\cup\Big(
C_\e(N_{\alpha\gamma},N_{\alpha\gamma})\cap (2\mathbb N)^2\Big)
\end{equation} 
where we omit the dependence on $L$ and $L^\prime$, then 
if $4\alpha\gamma$ is an odd integer it follows that 
\begin{equation}\label{caso-limite}
E_\e(J_\e(s,t), I_\e)=E_\e\big((C_\e(s,t) \cap \mathbb N)\cup(W\cap (2\mathbb N)^2), I_\e)
\end{equation}
where $W\subseteq C_\e(N_{\alpha\gamma}-1,N_{\alpha\gamma}-1)$. 
Hence, except if $4\alpha\gamma$ is an odd integer, 
the minimum problem for the energy $E_\e(I,I_\e)$ corresponds to minimize 
\begin{equation}
g_\e^{\alpha\gamma}(s,t)=\frac{1}{\e}(E_\e(J_\e(s,t),I_\e)-E_\e(I_\e))  
\end{equation} 
in $([0,i_\e(L^\prime)/4]\times[0,i_\e(L)/4])\cap \mathbb N^2$. 

In the following theorem we give an explicit computation of the set of minimum points of
$g^{\alpha\gamma}_\e(s,t)$, 
showing that this set is independent of $\e$ (for $\e$ small enough), and that 
there is uniqueness 
except if $4\alpha\gamma=2$ and 
$\min\{L,L^\prime\}=\lambda^+< \max\{L,L^\prime\}$.   
We prove the following result, which takes care of almost all values of $4\alpha\gamma$,
except when this is an odd integer. In that case we may have the non uniqueness 
phenomenon as described in (\ref{caso-limite}), which will not affect the final description of the motion.

\begin{theorem}
\label{teo-n} 
Let $4\alpha\gamma\in{\mathbb R}\setminus (2\NN+1)$ (i.e., not an odd integer).
If $4\alpha\gamma>2$, then the 
unique minimum point of $g^{\alpha\gamma}_\e(s,t)$ in the set 
$([0,i_\e(L^\prime)/4]\times[0,i_\e(L)/4])\cap \mathbb N^2$ is 
$(\varphi(L), \varphi(L^\prime))$ where 
\begin{equation} 
\varphi(l)=\left\{ \begin{array}{ll} \vspace{1mm}
0 & \hbox{ if } l> \lambda^+ \\ \vspace{1mm}
\lfloor \frac{\beta\gamma}{2l}+\frac{1}{4}\rfloor 
& \mbox{ if } l\in (\lambda^-,\lambda^+] \\ \vspace{1mm}
N_{\alpha\gamma} & \hbox{ if } l\in (\lambda_c^\ast,\lambda^-] \\ 
\lfloor\frac{2\beta\gamma}{3l}-\frac{2\alpha\gamma}{3}+\frac{1}{6}\rfloor & \hbox{ if } l\leq \lambda_c^\ast 
\end{array}
\right. 
\end{equation} 
and the critical thresholds are given by  
\begin{equation} \label{soglie-n} 
\lambda_c^\ast=\frac{4\beta\gamma}{4\alpha\gamma+5+6N_{\alpha\gamma}}, \quad 
\lambda^-=\frac{2\beta\gamma}{4N_{\alpha\gamma}-1} \quad \hbox{ and } \quad 
\lambda^+= \frac{2\beta\gamma}{3}. 
\end{equation}

\medskip 
\noindent If $4\alpha\gamma<2$, the minimum point is again unique, and it is given by 
$$
\left\{  
\begin{array}{ll} \vspace{2mm}
(0,0) & \hbox{ if } \min\{L,L^\prime\}=\lambda^+< \max\{L,L^\prime\}\\
(\varphi(L), \varphi(L^\prime)) & \hbox{ otherwise. } 
\end{array}
\right.$$ 

\medskip 
\noindent If $4\alpha\gamma=2$, for $\min\{L,L^\prime\}=\lambda^+< \max\{L,L^\prime\}$ 
there is no uniqueness, and the minimum points are $(0,0)$ and $(\varphi(L), \varphi(L^\prime))=(1,0)$. 
Otherwise, the minimum point is unique and it is given by $(\varphi(L), \varphi(L^\prime))$. 
\end{theorem}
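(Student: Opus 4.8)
The plan is to follow the architecture of the proof of Theorem~\ref{teo-minimo}: reduce the lattice minimization of $g^{\alpha\gamma}_\e$ to the minimization of an explicit limit function obtained by uniform convergence on compact sets, and then read off the integer minimizer. The one genuinely new feature is that the competitor $J_\e(s,t)$ changes its nature as $(s,t)$ crosses the level $N_{\alpha\gamma}$: for $s,t\le N_{\alpha\gamma}$ the retained weak set $C_\e(N_{\alpha\gamma},N_{\alpha\gamma})\cap(2\mathbb N)^2$ lies inside the bulk $C_\e(s,t)$ and there is no exposed mushy layer, whereas for $s>N_{\alpha\gamma}$ (resp.\ $t>N_{\alpha\gamma}$) the bulk has retreated past this region and part of it becomes an isolated mushy layer. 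First I would compute $F_\e(J_\e(s,t))-F_\e(I_\e)$ and $D_\e(J_\e(s,t),I_\e)$ separately on these regions, exactly as in~(\ref{variazione-bordo}) and in the dissipation expansion of Section~\ref{proc}, keeping careful track that the removed set is the shell $I_\e\setminus J_\e(s,t)$ with the retained even--even sites of the weak region punched out.

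With these expansions in hand I would show, as before, that $g^{\alpha\gamma}_\e$ converges uniformly on compact sets to a function $g^{\alpha\gamma}$ that decouples, up to a symmetric remainder of order $\e$, into $g_L(s)+g_{L'}(t)$, where $g_l$ is piecewise quadratic with a junction at $x=N_{\alpha\gamma}$. On the outer branch $x\ge N_{\alpha\gamma}$ the $x$--dependence reproduces, up to lower order terms, the polynomial $P_l$ of~(\ref{def-PL}) that already governed Theorem~\ref{teo-minimo}; hence its minimizer is $m(l)$ and its rounded value is $n(l)=\lfloor m(l)+\frac12\rfloor$, the first entry of the maximum in~(\ref{molo}). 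On the inner branch $x\le N_{\alpha\gamma}$ the weak bonds of the retained region are interior to the bulk, so the $\alpha$ contributions to the perimeter cancel against the $\alpha$ term produced by dissipating the shallow weak sites of the shell; the resulting parabola has the $\alpha$--independent minimizer $\frac{\beta\gamma}{2l}-\frac14$ and rounded value $\lfloor\frac{\beta\gamma}{2l}+\frac14\rfloor$, the second entry of the maximum. Verifying this cancellation and checking that the two branches meet at $x=N_{\alpha\gamma}$ with the correct value, so that $g_l$ has no spurious interior critical point, is the computational core of the argument.

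Next I would localize the minimizer. As in Proposition~\ref{limitato}, an $\e$--independent a priori bound confines the minimizer to a fixed compact set, and the strict monotonicity yielding $g^{\alpha\gamma}_\e(x,y)\ge g^{\alpha\gamma}_\e(y,x)$ for $x\le y$ lets me restrict to $\{x\ge y\}$, so that the product form $(\varphi(L),\varphi(L'))$ follows from minimizing $g_L$ and $g_{L'}$ separately. The global minimizer of the piecewise quadratic $g_l$ is then the floor of the larger of the two branch expressions, and the three thresholds arise from elementary comparisons: $\lambda^+=\frac{2\beta\gamma}{3}$ is the threshold above which $\frac{\beta\gamma}{2l}+\frac14<1$, so that $g_l$ is minimized at $x=0$ (pinning); $\lambda^-=\frac{2\beta\gamma}{4N_{\alpha\gamma}-1}$ is the value at which the inner rounded minimizer $\lfloor\frac{\beta\gamma}{2l}+\frac14\rfloor$ equals $N_{\alpha\gamma}$; and $\lambda^\ast_c=\frac{4\beta\gamma}{4\alpha\gamma+5+6N_{\alpha\gamma}}$ is obtained by equating the junction value $g_l(N_{\alpha\gamma})$ with the outer minimum $P_l(m(l))$, i.e.\ by deciding when it becomes convenient to dissipate the whole mushy layer and let the bulk slide out to $n(l)$. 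One checks directly that $\lambda^\ast_c<\lambda^-\le\lambda^+$, so the four entries of $\varphi$ occur in order.

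It remains to settle uniqueness and the three cases $4\alpha\gamma\gtrless 2$. As in Theorem~\ref{teo-minimo}, uniqueness can only fail when a continuous minimizer falls on a half--integer, and the tie is decided by the sign of the $O(\e)$ remainder $\frac\e\gamma R$ evaluated at the candidate points. The distinguished configuration is $\min\{L,L'\}=\lambda^+<\max\{L,L'\}$, where the long edge is pinned ($\varphi=0$) and the short edge sits exactly at its pinning threshold: here the comparison of $g^{\alpha\gamma}_\e$ at $(0,0)$ and at $(1,0)$ is governed precisely by the sign of $4\alpha\gamma-2$, giving the strict minimum $(0,0)$ when $4\alpha\gamma<2$, the genuine tie $\{(0,0),(1,0)\}$ when $4\alpha\gamma=2$, and $(\varphi(L),\varphi(L'))=(1,0)$ when $4\alpha\gamma>2$. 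Note also that $4\alpha\gamma<3$ forces $N_{\alpha\gamma}=1$ and hence $\lambda^-=\lambda^+$, so the middle branch of $\varphi$ is empty in that range, consistently with the collapse of the statement for $4\alpha\gamma\le 2$. The main obstacle I anticipate is the dissipation bookkeeping across the junction $x=N_{\alpha\gamma}$—producing the two quadratic branches with matching values and controlling the $\max\{s,t\}$ coupling terms uniformly in $\e$ so that they do not disturb the product structure—after which the threshold formulae and the tie--breaking reduce to the elementary comparisons above.
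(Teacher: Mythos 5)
Your plan reproduces the paper's proof in all essentials: the same three-regime analysis relative to $N_{\alpha\gamma}$ (which you repackage as a piecewise-quadratic profile $g_l$ with a junction at $N_{\alpha\gamma}$), uniform convergence plus an $\e$-independent localization as in Proposition~\ref{limitato}, the symmetry reduction to $\{x\ge y\}$, rounding of the two continuum minimizers $\mu(l)=\frac{\beta\gamma}{2l}-\frac14$ and $m(l)=\frac{2\beta\gamma}{3l}-\frac{2\alpha\gamma}{3}-\frac13$, monotonicity of the $O(\e)$ remainder for tie-breaking, and the decisive comparison of $(0,0)$ with $(1,0)$ at $L=\lambda^+$ governed by the sign of $4\alpha\gamma-2$ (the paper's $r(1,0)=2-4\alpha\gamma$ in Lemma~\ref{minore}). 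Your observation that the outer branch is $P_l$ is in fact exact up to an additive constant --- in the paper's expression $P_l(s-N_{\alpha\gamma})+\frac{6lN_{\alpha\gamma}}{\gamma}(s-N_{\alpha\gamma})+Q_l(N_{\alpha\gamma})$ the shift and the linear correction recombine into $P_l(s)+{\rm const}$ --- and your consistency check that $4\alpha\gamma<3$ forces $N_{\alpha\gamma}=1$, hence $\lambda^-=\lambda^+$, is correct. Your treatment of the mixed region $\min\{s,t\}\le N_{\alpha\gamma}\le\max\{s,t\}$ by absorbing the coupling into an $O(\e)$ remainder is also consistent with the paper, where the corner dissipation $\frac{8\e}{\gamma}(s-N_{\alpha\gamma})(N_{\alpha\gamma}-t)(2N_{\alpha\gamma}+2t+1)$ is indeed of order $\e$ (Lemmas~\ref{medio}, \ref{medio2} and Remark~\ref{dis-1}).

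There is, however, one step that fails as you state it: your derivation of $\lambda_c^\ast$ by ``equating the junction value $g_l(N_{\alpha\gamma})$ with the outer minimum $P_l(m(l))$''. Since the outer branch is exactly $P_l$ up to a constant, equating the junction value with the continuum minimum amounts to solving $m(l)=N_{\alpha\gamma}$, which gives $l=\frac{4\beta\gamma}{4\alpha\gamma+2+6N_{\alpha\gamma}}$, not the correct $\lambda_c^\ast=\frac{4\beta\gamma}{4\alpha\gamma+5+6N_{\alpha\gamma}}$ of (\ref{soglie-n}). The right condition, used in Lemma~\ref{maggiori}, is the half-integer crossing $m(l)-N_{\alpha\gamma}=\frac12$, equivalently the comparison of the values of $g_l$ at the two \emph{adjacent integers} $N_{\alpha\gamma}$ and $N_{\alpha\gamma}+1$: the transition happens when the integer minimizer of the outer branch, constrained to $\{s\ge N_{\alpha\gamma}\}$, jumps off the junction, not when continuum values coincide. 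This is the same continuum-versus-lattice distinction that produces the floors throughout the theorem, and it is in fact consistent with the recipe you state elsewhere in the same paragraph (take the floor of the larger of the two branch expressions), so the error is local and repairable; but carried out literally, your stated derivation would yield a wrong threshold and hence a wrong third branch of $\varphi$.
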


\begin{remark}\rm
(i) Note that the non-uniqueness exactly for $4\alpha\gamma=2$ depends on the
simplifying choice (\ref{condo}). This choice however influences only the value 
at which we have non uniqueness, but not the final description.

(ii) Note that $\varphi$ can be equivalently written as the integer part in (\ref{molo}).
\end{remark}

\begin{proof} 
We assume $L^\prime\geq L$. 
Since the weak islands appear only when the distance from the boundary is greater than $2N_{\alpha\gamma}$, 
in the computation of the minimum point of the energy
we have to consider three cases in dependence on the values of $s$ and $t$, namely $s,t\leq N_{\alpha\gamma}$, 
$\min\{s,t\}\leq N_{\alpha\gamma}\leq \max\{s,t\}$ and $s,t\geq N_{\alpha\gamma}$.

\medskip 

We start by computing the expression of $g_\e^{\alpha\gamma}$ 
in $[0,N_{\alpha\gamma}]^2\cap \mathbb N^2$. 

\begin{figure}[h!]
\centerline{\includegraphics [width=4.5in]{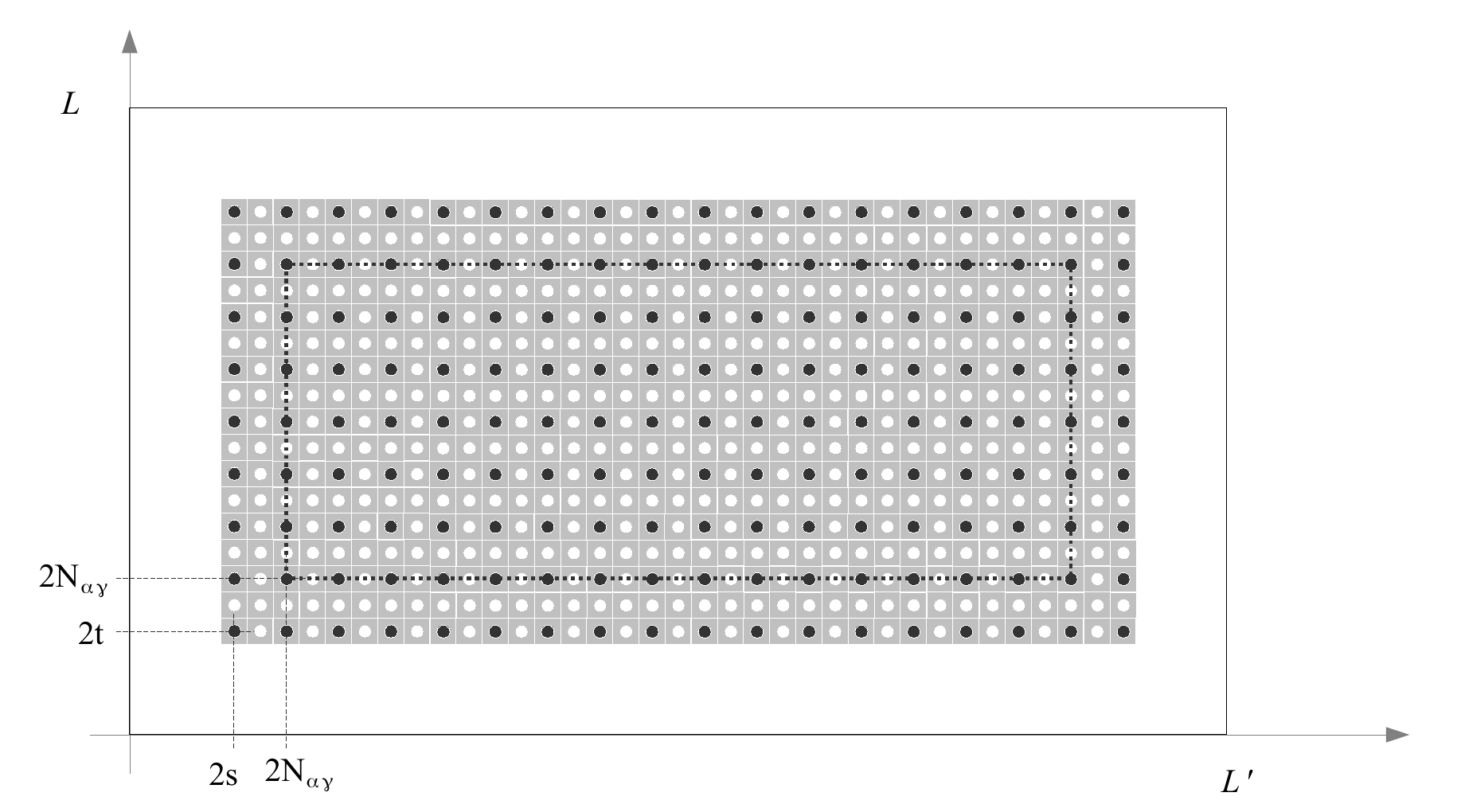}}
\caption{The set $J_\e(s,t)$ with $s,t\leq N_{\alpha\gamma}$}\label{Jepsilon1}
   \end{figure}
Assuming $s\geq t$, for  
$(s,t)\in[0,N_{\alpha\gamma}]^2\cap \mathbb N^2$ 
we get
\begin{eqnarray}
g_\e^{\alpha\gamma}(s,t)&=& -4\beta s - 4\beta t
-4\e\alpha s -4\e\alpha t +\frac{4\e}{\gamma}\sum_{j=1}^{2s}(\sum_{l=1}^j l+ j(2s-j)) \nonumber \\
&&+ 
\frac{2\e}{\gamma}(i_\e(L)+1-4s)\sum_{j=1}^{2s} j
+\frac{2\e}{\gamma}(i_\e(L^\prime)+1-4s)\sum_{j=1}^{2t} j \nonumber \\
&=&  
\frac{L}{\gamma}p(s)- 4\beta s
+\frac{L^\prime}{\gamma}p(t)-4\beta t-\frac{\e}{\gamma}\varrho_\e p(t)\nonumber \\
&&+ \frac{\e}{\gamma}\Big(\frac{4}{3}(1+4s)p(s)
+(1-4s) (p(s)+p(t)))-4\alpha\gamma (s+t)\Big), 
 \nonumber 
\end{eqnarray}
where $p(x)=2x(2x+1)$ and $\varrho_\e=\frac{L^\prime}{\e}-i_\e(L^\prime)$.  
Hence, 
setting for $l>0$ 
\begin{equation}\label{polinomio-piccolo} 
Q_l(x)=\frac{l}{\gamma}p(x)-4\beta x=
\frac{4 l}{\gamma}x^2-2\Big(2\beta-\frac{l}{\gamma}\Big)x, 
\end{equation} 
the function $g_\e^{\alpha\gamma}$ 
can be expressed in $[0,N_{\alpha\gamma}]^2\cap \mathbb N^2$ as 
\begin{equation}\label{g-piccola}
g_\e^{\alpha\gamma}(s,t)=Q_L(s)+Q_{L^\prime}(t)+\frac{\e}{\gamma} r(s,t)-\frac{\e}{\gamma}\varrho_\e p(t) 
\end{equation}
where 
$r(s,t)$ is 
the symmetric function 
defined for $s\geq t$ 
by 
\begin{equation}
r(s,t)=\frac{4}{3}(1+4s)p(s)
+(1-4s) (p(s)+p(t))-4\alpha\gamma (s+t). \nonumber 
\end{equation}
The following lemma holds. 
\begin{lemma} \label{minore} 
If $4\alpha\gamma>2$, 
the unique minimum point of $g^{\alpha\gamma}_\e(s,t)$ in the set 
$[0,N_{\alpha\gamma}]^2 \cap \mathbb N^2$ is 
given by 
$(\varphi(L)\wedge N_{\alpha\gamma}, \varphi(L^\prime)\wedge N_{\alpha\gamma})$.  

\noindent If $4\alpha\gamma<2$, the minimum point is again unique, and it is given by 
$$
\left\{  
\begin{array}{ll} \vspace{2mm}
(0,0) & \hbox{ if } 
L=\lambda^+< 
L^\prime\\
(\varphi(L)\wedge N_{\alpha\gamma},\varphi(L^\prime)\wedge N_{\alpha\gamma}) 
& \hbox{ otherwise. } 
\end{array}
\right.$$ 

\medskip 
\noindent If $4\alpha\gamma=2$, for $L=\lambda^+<L^\prime$ 
there is no uniqueness, and the minimum points are $(0,0)$ and 
$(\varphi(L)\wedge N_{\alpha\gamma},\varphi(L^\prime)\wedge N_{\alpha\gamma})=(1,0)$.
Otherwise, the minimum point is unique and it is given by 
$(\varphi(L)\wedge N_{\alpha\gamma},\varphi(L^\prime)\wedge N_{\alpha\gamma}).$
\end{lemma}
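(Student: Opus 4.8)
The plan is to follow the scheme of Theorem~\ref{teo-minimo}, but exploiting the crucial simplification that here the minimisation is already confined to the \emph{fixed finite} set $[0,N_{\alpha\gamma}]^2\cap\mathbb{N}^2$, whose cardinality depends only on $\alpha\gamma$ and not on $\e$. Consequently no analogue of Proposition~\ref{limitato} is required: from the representation (\ref{g-piccola}) one reads off
\[
g_\e^{\alpha\gamma}(s,t)=Q_L(s)+Q_{L'}(t)+\frac{\e}{\gamma}\bigl(r(s,t)-\varrho_\e\,p(t)\bigr),
\]
so that $g_\e^{\alpha\gamma}\to g:=Q_L+Q_{L'}$ pointwise (hence uniformly on the finite domain), with error uniformly $O(\e)$. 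The first task is therefore to minimise the \emph{decoupled} limit $g$, and the second is to resolve the residual ties by means of the $O(\e)$ correction.

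First I would analyse each one‑variable parabola $Q_l$ from (\ref{polinomio-piccolo}). It is strictly convex with real minimiser $q(l)=\frac{\beta\gamma}{2l}-\frac14$, so its minimiser over $\{0,\dots,N_{\alpha\gamma}\}$ is the rounding $\lfloor q(l)+\frac12\rfloor$ truncated at $N_{\alpha\gamma}$, i.e.\ exactly $\varphi(l)\wedge N_{\alpha\gamma}$. Here one checks the threshold identifications of (\ref{soglie-n}): $q(l)+\frac12=\frac{\beta\gamma}{2l}+\frac14$ equals $1$ precisely at $l=\lambda^+$ and equals $N_{\alpha\gamma}$ precisely at $l=\lambda^-$, so on the box the three regimes $l>\lambda^+$, $\lambda^-<l\le\lambda^+$, $l\le\lambda^-$ produce $0$, $\lfloor\frac{\beta\gamma}{2l}+\frac14\rfloor$ and $N_{\alpha\gamma}$, respectively (the fourth branch of $\varphi$ and the threshold $\lambda_c^\ast$ play no role inside $[0,N_{\alpha\gamma}]^2$). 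Since $g$ decouples, its minimiser over the box is $(\varphi(L)\wedge N_{\alpha\gamma},\varphi(L')\wedge N_{\alpha\gamma})$, and whenever neither $q(L)+\frac12$ nor $q(L')+\frac12$ equals an integer in $\{1,\dots,N_{\alpha\gamma}\}$ this minimiser is \emph{unique}; pointwise convergence on the finite set then transfers uniqueness and location to $g_\e^{\alpha\gamma}$ for $\e$ small.

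The main obstacle is the tie‑breaking at the threshold lengths, where $g$ has two (or four) minimisers and the sign of the $O(\e)$ term decides. The decisive computation is at a coordinate sitting at $\lambda^+$: a direct evaluation gives $g_\e^{\alpha\gamma}(1,0)-g_\e^{\alpha\gamma}(0,0)=Q_L(1)+\frac{\e}{\gamma}(2-4\alpha\gamma)$, and since $Q_{\lambda^+}(1)=Q_{\lambda^+}(0)=0$ the tie between $(0,0)$ and $(1,0)$ is governed by the sign of $2-4\alpha\gamma$. This yields exactly the three alternatives of the statement: for $4\alpha\gamma>2$ the correction is strictly negative and selects the larger integer, for $4\alpha\gamma<2$ it is strictly positive and selects $(0,0)$, and for $4\alpha\gamma=2$ it vanishes, leaving the two minimisers $(0,0)$ and $(1,0)$.

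To finish I would rule out any surviving non‑uniqueness away from the listed cases. For $4\alpha\gamma<2$ (so $N_{\alpha\gamma}=1$ and the box is $\{0,1\}^2$) the only internal tie is at $\lambda^+$; the borderline $L=L'=\lambda^+$ is settled using the symmetry of $g_\e^{\alpha\gamma}$ (on the diagonal $\varrho_\e=0$ under (\ref{condo})) together with a direct comparison of the four values $g_\e^{\alpha\gamma}(0,0),g_\e^{\alpha\gamma}(1,0),g_\e^{\alpha\gamma}(0,1),g_\e^{\alpha\gamma}(1,1)$, which selects $(1,1)$. For $4\alpha\gamma>2$, where larger boxes and the intermediate thresholds $l=\frac{2\beta\gamma}{4k-1}$ with $k\le N_{\alpha\gamma}$ appear, I would invoke the monotonicity of $r(s,t)-\varrho_\e p(t)$ in each variable on $\{s,t\ge1\}$ to show that every such tie breaks toward the larger integer with the same sign as at $\lambda^+$, so the minimiser is always $(\varphi(L)\wedge N_{\alpha\gamma},\varphi(L')\wedge N_{\alpha\gamma})$ and is unique. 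The only genuinely delicate point is establishing this monotonicity uniformly, exactly as the corresponding monotonicity of $R$ was used in the proof of Theorem~\ref{teo-minimo}.
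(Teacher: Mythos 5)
Your proposal is correct and follows essentially the same route as the paper's proof: uniform convergence of the $O(\e)$ remainder reduces the problem to rounding the minimiser $(\mu(L),\mu(L'))$ of the decoupled quadratics $Q_L(s)+Q_{L'}(t)$ on the fixed box (with the thresholds $\lambda^\pm$ arising exactly from $\mu(l)=\tfrac12$ and $\mu(l)=N_{\alpha\gamma}-\tfrac12$), and ties are resolved by the monotonicity of $r(s,t)-\varrho_\e p(t)$ together with the sign of $r(1,0)=2-4\alpha\gamma$, which is precisely your computation $g_\e^{\alpha\gamma}(1,0)-g_\e^{\alpha\gamma}(0,0)=Q_L(1)+\tfrac{\e}{\gamma}(2-4\alpha\gamma)$. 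Your handling of the borderline $L=L'=\lambda^+$ by comparing the four values $g_\e^{\alpha\gamma}(0,0)$, $g_\e^{\alpha\gamma}(1,0)$, $g_\e^{\alpha\gamma}(0,1)$, $g_\e^{\alpha\gamma}(1,1)$ coincides with the paper's inequality $r(1,1)-\varrho_\e p(1)<\min\{r(1,0);\,r(0,1)-\varrho_\e p(1);\,r(0,0)\}$.
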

\begin{proof} 
The result follows by the uniform convergence of $\e r(s,t)-\e \varrho_\e p(t)$ to $0$ in $[0,N_{\alpha\gamma}]^2$, so that 
the minimum of the energy is obtained in the set of the points in $[0,N_{\alpha\gamma}]^2\cap \mathbb N^2$ 
minimizing the distance 
from the minimum point of $Q_L(s)+Q_{L^\prime}(t)$, given by 
$(\mu(L),\mu(L^\prime))$,   
where 
\begin{equation}\label{mu-L}
\mu(l)=\frac{2\beta\gamma-l}{4l}. 
\end{equation}

This set contains only one point except if $\mu(L)+\frac{1}{2}$ or 
$\mu(L^\prime)+\frac{1}{2}$ belong to the set of integers $\{1,\ldots, N_{\alpha\gamma}-1\}$. 
In all other cases, noting that the conditions $\mu(l)=N_{\alpha\gamma}-\frac{1}{2}$ 
and $\mu(l)=\frac{1}{2}$ 
give the critical lengths $\lambda^-$ and $\lambda^+$ respectively, 
the thesis follows. 

When the set of minimum points of $Q_L(s)+Q_{L^\prime}(t)$ in $[0,N_{\alpha\gamma}]^2\cap \mathbb N^2$ 
contains more than one point, 
we need some monotonicity properties of $r$. The function  
$r(s,t)-\varrho_\e p(t)$ is strictly decreasing with respect to each variable in $\{s,t\geq 1\}$; moreover,  
in $\{s\geq 1\}$ the function 
$r(s,0)$ is strictly decreasing and $r(s,0)>r(s,1)-\varrho_\e p(1)$. 
This gives the thesis except for the case 
$\mu(L)=\frac{1}{2}$ and $\mu(L^\prime)\leq\frac{1}{2}$ 
(hence $L=\lambda^+$ and $L^\prime\geq \lambda^+$).   
By 
noting that 
$$r(1,1)-\varrho_\e p(1)<\min\{r(1,0); r(0,1)-\varrho_\e p(1); r(0,0)\}, \quad r(1,0)=2-4\alpha\gamma$$
the proof is complete .
\end{proof}

\noindent 
Now, we compute the expression of $g_\e^{\alpha\gamma}$ in 
$([N_{\alpha\gamma}, \frac{L^\prime}{4\e}]\times[N_{\alpha\gamma}, \frac{L}{4\e}])\cap \mathbb N^2$. 
 
Following the proof of Theorem \ref{teo-minimo}, it turns out that it is not restrictive to consider 
only 
$(s,t)\in [N_{\alpha\gamma},\frac{L}{4\e}]^2 \cap \mathbb N^2.$

We can decompose $g_\e^{\alpha\gamma}(s,t)$ in the sum of 
two contributions. The first is due to the set $J_\e(N_{\alpha\gamma},N_{\alpha\gamma})$ 
and it is given by (\ref{g-piccola})  
\begin{equation}\nonumber 
g_\e^{\alpha\gamma}(N_{\alpha\gamma},N_{\alpha\gamma})=Q_L(N_{\alpha\gamma})+Q_{L^\prime}(N_{\alpha\gamma})+\frac{\e}{\gamma} r(N_{\alpha\gamma},N_{\alpha\gamma})
-\frac{\e}{\gamma}\varrho_\e p(N_{\alpha\gamma}). 
\end{equation}
\begin{figure}[h!]
\centerline{\includegraphics [width=4.5in]{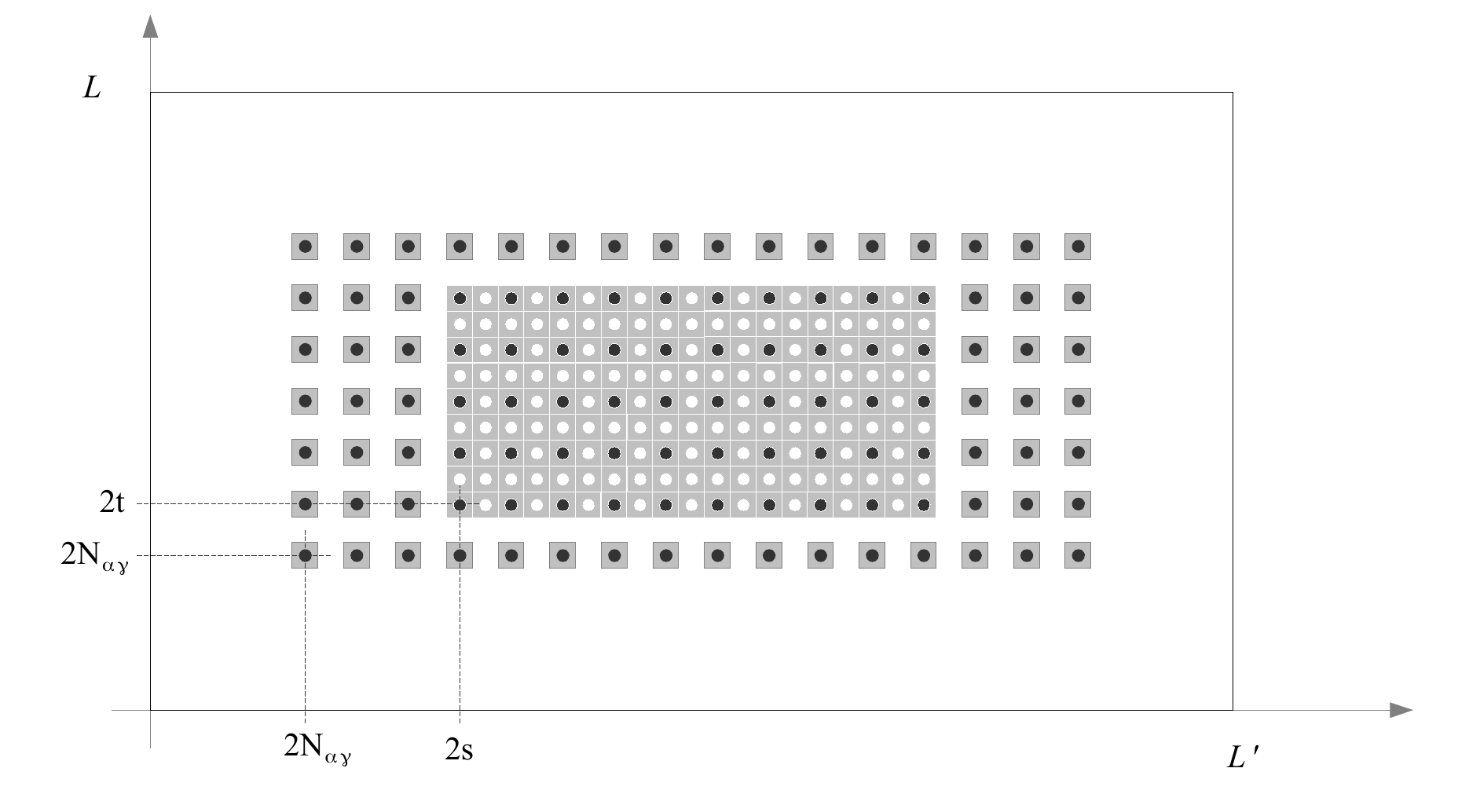}}
\caption{The set $J_\e(s,t)$ with $s,t\geq N_{\alpha\gamma}$}\label{Jepsilon2}
   \end{figure}
   
Then, we have a contribution which 
can be computed as in the case $4\alpha\gamma<1$ by substituting 
the initial set $I_\e$ with $I^{\alpha\gamma}_\e= 
C_\e(N_{\alpha\gamma},N_{\alpha\gamma})\cap \mathbb N^2$.   
Hence, 
following the proof of Theorem \ref{teo-minimo}, 
the contribution to $g_\e^{\alpha\gamma}$ is given by 
\begin{equation*} 
f_\e^{\alpha\gamma}(s-N_{\alpha\gamma}, t-N_{\alpha\gamma})+\frac{\e}{\gamma} \ 2N_{\alpha\gamma}\ \# 
(I_\e^{\alpha\gamma}\setminus J_\e(s,t)), 
\end{equation*} 
where the term $2N_{\alpha\gamma}\ \# 
(I_\e^{\alpha\gamma}\setminus J_\e(s,t))$ takes into account 
the additional distance $2N_{\alpha\gamma}$ of each point of the set from $C(I_\e)$, and 
\begin{equation}\label{feag}
\left. \begin{array}{ll}
f_\e^{\alpha\gamma}(h,k)&
=\disp \frac{1}{\e}\big(E_\e(J_\e(h+N_{\alpha\gamma}, k+N_{\alpha\gamma}), I_\e^{\alpha\gamma})
-F_\e(I_\e^{\alpha\gamma})\\
&=
\disp P_{L-4\e N_{\alpha\gamma}}(h)+P_{L^\prime-4\e N_{\alpha\gamma}}(k)
+\frac{\e}{\gamma}R(h,k)-\frac{\e}{\gamma}\varrho_\e \pi(k),
\end{array}
\right. 
\end{equation}
with $\pi$, $P_{l}$ and $R$  defined as in (\ref{def-pi}), (\ref{def-PL}) and (\ref{def-R}) respectively. 
Since 
\begin{equation*}\nonumber 
P_{L-4\e N_{\alpha\gamma}}(h)=
P_L(h)
-\frac{\e}{\gamma}(12N_{\alpha\gamma}h^2+
8N_{\alpha\gamma}(2\alpha\gamma+1)h), 
\end{equation*}
we get for $g_\e^{\alpha\gamma}(s,t)$ the following expression 
\begin{eqnarray*}
g_\e^{\alpha\gamma}(s,t)&=&P_{L}(s-N_{\alpha\gamma})
+\frac{6L N_{\alpha\gamma}}{\gamma}(s-N_{\alpha\gamma})
+Q_{L}(N_{\alpha\gamma})\\
&&+P_{L^\prime}(t-N_{\alpha\gamma})
+\frac{6L^\prime N_{\alpha\gamma}}{\gamma}(t-N_{\alpha\gamma})
+Q_{L^\prime}(N_{\alpha\gamma})\nonumber \\
&&+\frac{\e}{\gamma}r(N_{\alpha\gamma},N_{\alpha\gamma})+\frac{\e}{\gamma} R_{\alpha\gamma}(s-N_{\alpha\gamma},t-N_{\alpha\gamma})-\frac{\e}{\gamma}\varrho_\e \pi_{\alpha\gamma}(t)\nonumber 
\end{eqnarray*}
where $\pi_{\alpha\gamma}(k)=
 3k^2 
+2(3N_{\alpha\gamma}+2\alpha\gamma+1)k
+p(N_{\alpha\gamma})$ and 
\begin{eqnarray*}\nonumber 
&R_{\alpha\gamma}(h,k)=&
R(h,k)
+4N_{\alpha\gamma}(1-6N_{\alpha\gamma})(h+k)
-24N_{\alpha\gamma}hk\\
&&-12N_{\alpha\gamma}(h^2+k^2) 
-8N_{\alpha\gamma}(2\alpha\gamma+1)(h+k).
\end{eqnarray*}

Now we prove the following lemma. 
\begin{lemma} \label{maggiori}
The function $g^{\alpha\gamma}_\e(s,t)$ has a unique minimum point 
in the set 
$([N_{\alpha\gamma},\frac{L^\prime}{4\e}] 
\times [N_{\alpha\gamma},\frac{L}{4\e}]) \cap \mathbb N^2$,  
given by $(\varphi(L)\vee N_{\alpha\gamma},\varphi(L^\prime)\vee N_{\alpha\gamma})$.   
\end{lemma}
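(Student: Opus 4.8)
The plan is to transplant the argument of Theorem~\ref{teo-minimo} to the shifted region, using the explicit decomposition of $g_\e^{\alpha\gamma}$ already obtained. Assume as usual $L^\prime\geq L$ and pass to the shifted variables $h=s-N_{\alpha\gamma}$, $k=t-N_{\alpha\gamma}$. The displayed expression for $g_\e^{\alpha\gamma}$ groups into $\widetilde P_L(s)+\widetilde P_{L^\prime}(t)+Q_L(N_{\alpha\gamma})+Q_{L^\prime}(N_{\alpha\gamma})+\frac\e\gamma\big(r(N_{\alpha\gamma},N_{\alpha\gamma})+R_{\alpha\gamma}(h,k)-\varrho_\e\pi_{\alpha\gamma}(t)\big)$, where $\widetilde P_l(x)=P_l(x-N_{\alpha\gamma})+\frac{6lN_{\alpha\gamma}}\gamma(x-N_{\alpha\gamma})$. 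A one-line computation shows that $\widetilde P_l$ is a parabola with positive leading coefficient whose vertex lies exactly at $x=m(l)$: the added linear term cancels the shift $N_{\alpha\gamma}$ coming from $P_l(\cdot-N_{\alpha\gamma})$. Hence $g_\e^{\alpha\gamma}$ converges uniformly on compact sets to the separable, upward--coercive function $\widetilde P_L(s)+\widetilde P_{L^\prime}(t)+\mathrm{const}$, whose vertices sit at $(m(L),m(L^\prime))$.

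First I would establish the analogue of Proposition~\ref{limitato}: for $\e$ small the minimum over the large region is attained in a box $[N_{\alpha\gamma},\overline x]^2$ with $\overline x$ independent of $\e$. This is the same coercivity computation as before---on $\{s\geq t\}$ the function is a cubic in which the positive quadratic $\tfrac{3L}\gamma(s-N_{\alpha\gamma})^2$ dominates the $O(\e)$ cubic $\tfrac\e\gamma R_{\alpha\gamma}$ throughout the admissible range $s\leq L/(4\e)$, so $g_\e^{\alpha\gamma}$ exceeds its value at $(N_{\alpha\gamma},N_{\alpha\gamma})$ outside a fixed box; equivalently one may quote Proposition~\ref{limitato} for $f_\e^{\alpha\gamma}$ in (\ref{feag}), the remaining count term being nonnegative. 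Since the $\varrho_\e$--perturbation is carried only by the $t$--variable, the argument giving (\ref{sopra-no}) again confines the minimum to $\{s\geq t\}$.

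On the fixed box, uniform convergence reduces the discrete problem to minimizing the separable parabola--sum over integers $\geq N_{\alpha\gamma}$, whose solution in each variable is the vertex rounded to the nearest integer and clipped to the region, namely $\lfloor m(L)+\tfrac12\rfloor\vee N_{\alpha\gamma}$ and $\lfloor m(L^\prime)+\tfrac12\rfloor\vee N_{\alpha\gamma}$. It then remains to identify this with $\varphi(l)\vee N_{\alpha\gamma}$, which is a direct check on the thresholds in (\ref{soglie-n}): at $l=\lambda_c^\ast$ one computes $m(l)+\tfrac12=N_{\alpha\gamma}+1$, so for $l\leq\lambda_c^\ast$ we have $\varphi(l)=\lfloor m(l)+\tfrac12\rfloor\geq N_{\alpha\gamma}$ and the two expressions agree; for $l>\lambda_c^\ast$ one has $m(l)+\tfrac12<N_{\alpha\gamma}+1$ while $\varphi(l)\leq N_{\alpha\gamma}$ (using $\tfrac{\beta\gamma}{2\lambda^-}+\tfrac14=N_{\alpha\gamma}$), so both sides collapse to $N_{\alpha\gamma}$.

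The main obstacle is the tie--breaking when $m(L)+\tfrac12$ or $m(L^\prime)+\tfrac12$ is an integer $\geq N_{\alpha\gamma}$, where the limit has several minimizers and the selection is decided by the $O(\e)$ correction, exactly as at the end of Theorem~\ref{teo-minimo}. I would show that $R_{\alpha\gamma}(h,k)-\varrho_\e\pi_{\alpha\gamma}(\cdot)$ is strictly decreasing in each variable on the relevant range, so that ties resolve toward the larger index and produce the floor of $m+\tfrac12$. This monotonicity is inherited from that of $R-\varrho_\e\pi$, since every monomial of $R_{\alpha\gamma}$ beyond those of $R$ carries a negative coefficient (note $1-6N_{\alpha\gamma}<0$), so the analysis is if anything easier than in Theorem~\ref{teo-minimo}; the only point needing care is the consistency of the clipping at the boundary $s=N_{\alpha\gamma}$, which follows automatically from the clipped--vertex description and confirms that, unlike in the small region, no genuine non-uniqueness survives here (even for $4\alpha\gamma=2$).
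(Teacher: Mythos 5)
Your proposal is correct and follows essentially the same route as the paper's proof: the same decomposition of $g_\e^{\alpha\gamma}$ in which the linear term $\frac{6lN_{\alpha\gamma}}{\gamma}(x-N_{\alpha\gamma})$ cancels the shift so the limit vertex sits at $(m(L),m(L^\prime))$ (whence the threshold $\lambda_c^\ast$ via $m(l)-N_{\alpha\gamma}=\frac12$), the same confinement to a compact set as in Proposition \ref{limitato}, and the same tie-breaking through strict monotonicity of $R_{\alpha\gamma}(x,y)-\varrho_\e\pi_{\alpha\gamma}(y)$ in each variable. The only step stated a bit quickly is the claim that this monotonicity is inherited from that of $R-\varrho_\e\pi$: the latter decreases only on $\{x,y\geq 1\}$, so for $h$ or $k$ in $[0,1)$ you still need the extra negative linear terms of $R_{\alpha\gamma}$ (which carry a factor $N_{\alpha\gamma}\geq 1$) to dominate the positive linear part of $R$ --- an elementary check that does hold, and which the paper itself asserts without detail.
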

\begin{remark}
{\rm Note that $\varphi(l)\vee N_{\alpha\gamma}=\lfloor\frac{2\beta\gamma}{3l}-\frac{2\alpha\gamma}{3}+\frac{1}{6}\rfloor$ for $\lambda\leq \lambda_c^\ast$ and $\varphi(l)\vee N_{\alpha\gamma}=N_{\alpha\gamma}$ 
otherwise, 
where $\lambda_c^\ast$ is defined by (\ref{soglie-n}).} 
\end{remark}
\begin{proof}[Proof of Lemma {\rm\ref{maggiori}}.] 
We note that the minimum point of $P_{l}(x)
+\frac{6l N_{\alpha\gamma}}{\gamma}(x)$ is given by $m(l)-N_{\alpha\gamma}$, where 
$m(l)=\frac{2\beta\gamma}{3l}-\frac{2\alpha\gamma}{3}-\frac{1}{3}$ 
denotes the minimum point of $P_l$; 
the condition $m(l)-N_{\alpha\gamma}=\frac{1}{2}$ introduces the critical threshold 
$\lambda_c^\ast$ given by (\ref{soglie-n}). 

As in 
Proposition \ref{limitato}, 
we can prove that also in this case the minimum of $g_\e^{\alpha\gamma}$ is in fact achieved 
in a compact set independent of $\e$. 
This allows to show that 
a minimum point of $g_\e^{\alpha\gamma}$ in 
$[N_{\alpha\gamma},\frac{L}{4\e}]^2 \cap \mathbb N^2$ 
necessarily belongs to the set of points minimizing the distance from 
$(m(L), m(L^\prime))$.

Since the function $R_{\alpha\gamma}(x,y)-\varrho_\e\pi_{\alpha\gamma}(y)$ is strictly decreasing with respect to each variable 
in $\{x,y\geq 0\}$, then 
the thesis of Lemma \ref{maggiori} follows also if the set of points 
in $[N_{\alpha\gamma},\frac{L}{4\e}]^2 \cap \mathbb N^2$ 
minimizing the distance 
from $(m(L), m(L^\prime))$ 
contains more than one point.  
Note that the minimum is obtained for 
$(\lfloor m(L)+\frac{1}{2}\rfloor, 0)$ 
even if $L=\lambda_c^\ast$ and $L^\prime>\lambda_c^\ast.$
\end{proof}

Now, we have to consider the case $\min\{s,t\}\leq N_{\alpha\gamma}\leq \max\{s,t\}.$ 

If $(s,t)\in ([N_{\alpha\gamma},\frac{L^\prime}{4\e}]\times[0,N_{\alpha\gamma}])\cap \mathbb N^2$,  
we can decompose $g_\e^{\alpha\gamma}(s,t)$ in the sum of 
three contribution. The first is due to the set $J_\e(N_{\alpha\gamma},t)$ and it is given by (\ref{g-piccola})  
\begin{equation}\nonumber 
g_\e^{\alpha\gamma}(N_{\alpha\gamma},t) =Q_L(N_{\alpha\gamma})+Q_{L^\prime}(t)+\frac{\e}{\gamma} r(N_{\alpha\gamma},t)
-\frac{\e}{\gamma}\varrho_\e p(t). 
\end{equation}
The second term can be computed following the same argument of the case $s,t\geq N_{\alpha\gamma}$, and it is given by 
\begin{equation*} 
f_\e^{\alpha\gamma}(s-N_{\alpha\gamma}, 0)+\frac{\e}{\gamma} \ 2N_{\alpha\gamma} 
(3(s-N_{\alpha\gamma}) i_\e(L) +2(s-N_{\alpha\gamma})(1-6N_{\alpha\gamma})) 
\end{equation*}
where $f_\e^{\alpha\gamma}$ is defined as in (\ref{feag}). 
Moreover, we have to consider the dissipation due to 
$J_\e(N_{\alpha\gamma},t)\setminus J_\e(s,t)$; 
denoting by $Q_\e(s,t)$ the set $[2N_{\alpha\gamma}, 2s)\times [2t,2N_{\alpha\gamma})$, 
the additional term turns out to be 
\begin{eqnarray}
&\disp\frac{4\e}{\gamma}\sum_{Q_\e(s,t)\cap I_\e}\hspace{-3mm}\dist_{\infty}(i,C(I_\e))&=
\frac{4\e}{\gamma}(2s-2N_{\alpha\gamma}) \disp\sum_{j=1}^{2N_{\alpha\gamma}-2t}(j+2t) \nonumber \\
&&=\frac{8\e}{\gamma}(s-N_{\alpha\gamma})  (N_{\alpha\gamma}-t)(2N_{\alpha\gamma}+2t+1). \nonumber 
\end{eqnarray}
\begin{figure}[h!]
\centerline{\includegraphics [width=4.5in]{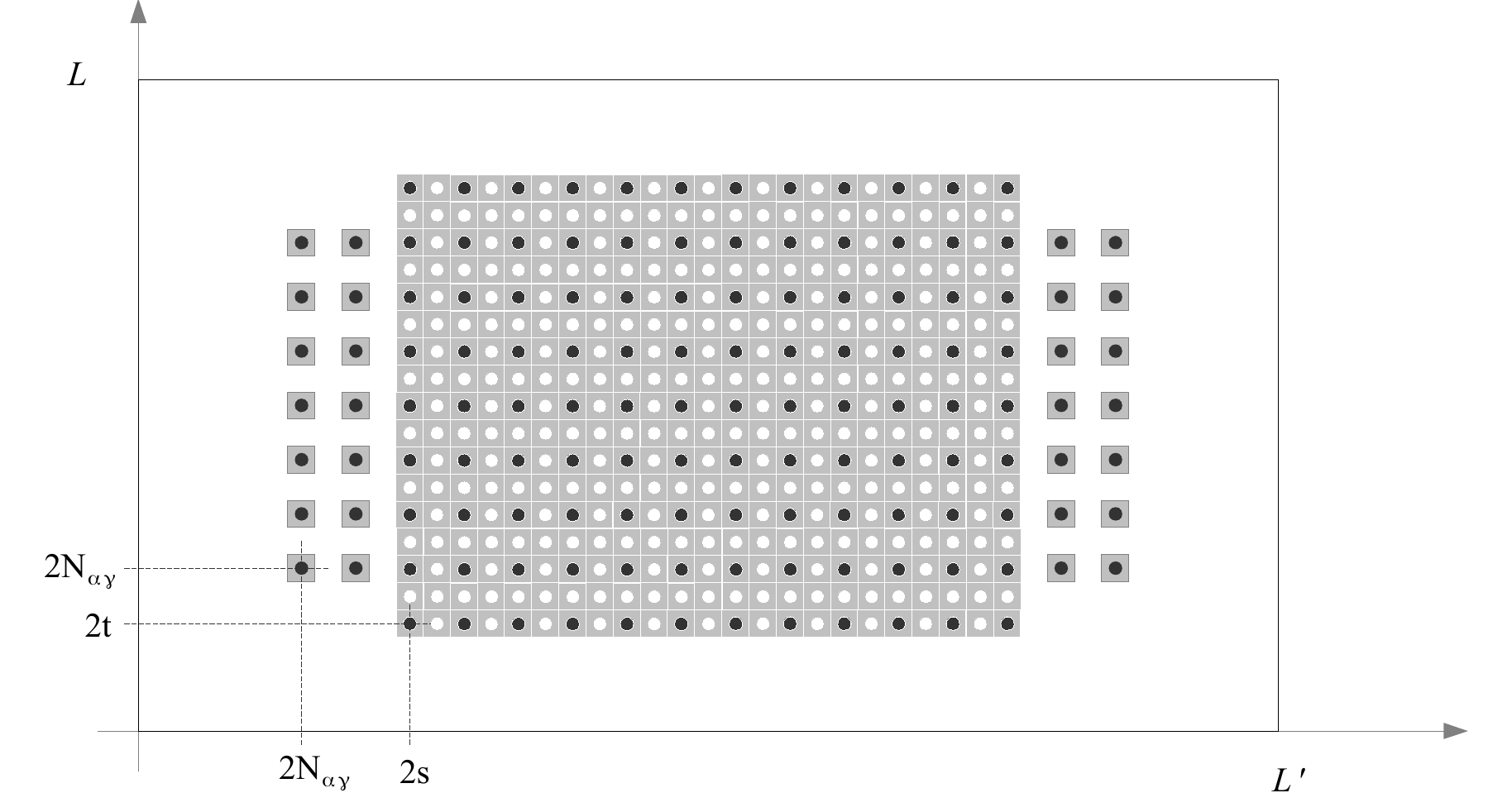}}
\caption{The set $J_\e(s,t)$ with $s\geq N_{\alpha\gamma}\geq t$}\label{Jepsilon3}
   \end{figure}
   
Hence, we get for the energy in 
$([N_{\alpha\gamma},\frac{L^\prime}{4\e}]\times [0,N_{\alpha\gamma}])\cap \mathbb N^2$ 
the following expression 
\begin{eqnarray}
g_\e^{\alpha\gamma}(s,t)&=&P_L(s-N_{\alpha\gamma})
+\frac{6L N_{\alpha\gamma}}{\gamma}(s-N_{\alpha\gamma})
+Q_L(N_{\alpha\gamma})+Q_{L^\prime}(t)\nonumber \\
&&+\frac{\e}{\gamma} r_{\alpha\gamma}-\frac{\e}{\gamma}\varrho_\e p(t)\nonumber 
\end{eqnarray}
where $P_L$ is defined by (\ref{def-PL}) as in the case $4\alpha\gamma<1$, and 
\begin{eqnarray*}
r_{\alpha\gamma}(h,t)=r(N_{\alpha\gamma},t)+8h  (N_{\alpha\gamma}-t)(2N_{\alpha\gamma}+2t+1)
+R_{\alpha\gamma}(h,0).  
\end{eqnarray*}
Note that $r_{\alpha\gamma}(h,t)$ decreases with respect to each variable in $\{h,t\geq 0\}$. 

\smallskip
The following lemma holds. 
\begin{lemma} \label{medio}
If $L\leq \lambda_c^\ast$, the unique minimum point of $g^{\alpha\gamma}_\e(s,t)$ 
in the set 
$([N_{\alpha\gamma},\frac{L^\prime}{4\e}]\times [0,N_{\alpha\gamma}])\cap \mathbb N^2$ 
 is $(\big\lfloor\frac{2\beta\gamma}{3L}-\frac{2\alpha\gamma}{3}+\frac{1}{6}\big\rfloor,\varphi(L^\prime)\wedge N_{\alpha\gamma})$.
\end{lemma}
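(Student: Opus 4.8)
The plan is to read off everything from the explicit expression for $g_\e^{\alpha\gamma}$ in $([N_{\alpha\gamma},\frac{L^\prime}{4\e}]\times[0,N_{\alpha\gamma}])\cap\mathbb N^2$ derived just above the statement. Its crucial feature is that, modulo the $(s,t)$-independent constant $Q_L(N_{\alpha\gamma})$ and an $O(\e)$ remainder, the function splits into a part depending only on $s$, namely $P_L(s-N_{\alpha\gamma})+\frac{6LN_{\alpha\gamma}}{\gamma}(s-N_{\alpha\gamma})$, plus a part depending only on $t$, namely $Q_{L^\prime}(t)$. All the coupling between the two variables sits inside $\frac{\e}{\gamma}r_{\alpha\gamma}(s-N_{\alpha\gamma},t)$ and is therefore negligible to leading order. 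Thus the minimization decouples into two independent one-dimensional quadratics, exactly the situation treated in the proofs of Lemma \ref{maggiori} (for the $s$-variable) and Lemma \ref{minore} (for the $t$-variable).

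Following Proposition \ref{limitato}, I would first confine the minimization to a compact set independent of $\e$; the uniform convergence of the $O(\e)$ remainder to $0$ on compacta then forces any minimizer to lie among the integer points closest to $(m(L),\mu(L^\prime))$, where $m(L)$ is the minimizer of $P_L(\cdot-N_{\alpha\gamma})+\frac{6LN_{\alpha\gamma}}{\gamma}(\cdot-N_{\alpha\gamma})$ computed in the proof of Lemma \ref{maggiori} and $\mu(L^\prime)$ is the minimizer of $Q_{L^\prime}$ given in (\ref{mu-L}). For the $s$-variable the nearest integer is $\lfloor m(L)+\frac12\rfloor=\lfloor\frac{2\beta\gamma}{3L}-\frac{2\alpha\gamma}{3}+\frac16\rfloor$, and the hypothesis $L\le\lambda_c^\ast$ is precisely the condition $m(L)-N_{\alpha\gamma}\ge\frac12$ (the relation defining $\lambda_c^\ast$ in the proof of Lemma \ref{maggiori}), which guarantees this integer is at least $N_{\alpha\gamma}+1$ and hence lies in the admissible range $[N_{\alpha\gamma},\frac{L}{4\e}]$ for $\e$ small. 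For the $t$-variable, using $\mu(L^\prime)+\frac12=\frac{\beta\gamma}{2L^\prime}+\frac14$, the nearest integer inside $[0,N_{\alpha\gamma}]$ is $\lfloor\mu(L^\prime)+\frac12\rfloor\wedge N_{\alpha\gamma}=\varphi(L^\prime)\wedge N_{\alpha\gamma}$, which is exactly the claimed value.

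The only delicate point, and the one I expect to be the main obstacle, is the tie-breaking when $m(L)+\frac12$ or $\mu(L^\prime)+\frac12$ is itself an integer, so that two (or four) integer points are equidistant from the continuous minimizer and the leading-order term alone fails to single out a unique one. Here I would invoke the monotonicity already recorded before the statement, that $r_{\alpha\gamma}(h,t)$ is strictly decreasing in each variable on $\{h,t\ge0\}$: comparing the $O(\e)$ corrections of the tied points shows that the smaller value of $r_{\alpha\gamma}$, and hence the minimum of $g_\e^{\alpha\gamma}$, is attained at the larger of the tied integers, which is exactly the value delivered by the convention $\lfloor\,\cdot+\frac12\rfloor$. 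Handling the $s$-ties, the $t$-ties, and the simultaneous ties in turn, as at the ends of the proofs of Lemma \ref{minore} and Lemma \ref{maggiori}, then yields uniqueness of the minimum point and identifies it with $(\lfloor\frac{2\beta\gamma}{3L}-\frac{2\alpha\gamma}{3}+\frac16\rfloor,\varphi(L^\prime)\wedge N_{\alpha\gamma})$, completing the argument.
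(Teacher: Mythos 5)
Your proposal is correct and follows essentially the same route as the paper's (much terser) proof: minimize the decoupled leading-order quadratics $P_L(s-N_{\alpha\gamma})+\frac{6LN_{\alpha\gamma}}{\gamma}(s-N_{\alpha\gamma})$ and $Q_{L^\prime}(t)$ via uniform convergence of the $O(\e)$ remainder on a compact set, so that minimizers are the integer points nearest to $(m(L),\mu(L^\prime))$, with ties resolved by the strict monotonicity of $r_{\alpha\gamma}$ (together with the $-\varrho_\e p(t)$ term) toward the larger integer, matching the $\lfloor\cdot+\frac{1}{2}\rfloor$ convention. Your identification of $L\le\lambda_c^\ast$ with $m(L)-N_{\alpha\gamma}\ge\frac{1}{2}$ and of $\lfloor\mu(L^\prime)+\frac{1}{2}\rfloor\wedge N_{\alpha\gamma}$ with $\varphi(L^\prime)\wedge N_{\alpha\gamma}$ is exactly what the paper uses, so nothing further is needed.
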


The proof follows as in the previous cases by uniform convergence; the minimum 
is obtained in the set of the points in 
$([N_{\alpha\gamma},\frac{L^\prime}{4\e}]\times [0,N_{\alpha\gamma}])\cap \mathbb N^2$
minimizing the distance 
from 
$(m(L),\mu(L^\prime))$. 

Again, the monotonicity of $r_{\alpha\gamma}$ 
implies that the result holds 
also when $m(L)+\frac{1}{2}$ or 
$\mu(L^\prime)+\frac{1}{2}$ 
are integer. 

\bigskip 

If $(s,t)\in ([0,N_{\alpha\gamma}]\times[N_{\alpha\gamma},\frac{L}{4\e}])
\cap \mathbb N^2$, 
following the argument of the previous case the expression for $g_\e^{\alpha\gamma}$ turns out to be 
\begin{eqnarray}
g_\e^{\alpha\gamma}(s,t)&=&Q_L(s)+P_{L^\prime}(t-N_{\alpha\gamma})
+\frac{6L^\prime N_{\alpha\gamma}}{\gamma}(t-N_{\alpha\gamma})
+Q_{L^\prime}(N_{\alpha\gamma})\nonumber \\
&&+\frac{\e}{\gamma} r_{\alpha\gamma}(t-N_{\alpha\gamma},s)-\frac{\e}{\gamma}\varrho_\e \pi_{\alpha\gamma}(t)\nonumber 
\end{eqnarray}
where 
$\pi_{\alpha\gamma}(k)=
 3k^2 
+2(3N_{\alpha\gamma}+2\alpha\gamma+1)k
+p(N_{\alpha\gamma}).$ 

\medskip 

The following lemma holds.   
\begin{lemma}\label{medio2}
If $L^\prime\leq \lambda_c^\ast$, the unique minimum point of $g^{\alpha\gamma}_\e(s,t)$ 
in $([0,N_{\alpha\gamma}] \times [N_{\alpha\gamma},\frac{L}{4\e}])\cap \mathbb N^2$ 
is $(N_{\alpha\gamma}, 
\big\lfloor\frac{2\beta\gamma}{3L^\prime}-\frac{2\alpha\gamma}{3}+\frac{1}{6}\big\rfloor)=
(\varphi(L)\wedge N_{\alpha\gamma}, 
\big\lfloor\frac{2\beta\gamma}{3L^\prime}-\frac{2\alpha\gamma}{3}+\frac{1}{6}\big\rfloor)$. 
\end{lemma}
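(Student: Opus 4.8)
The plan is to exploit the complete symmetry with Lemma~\ref{medio}, which has already been proved. The regions $([N_{\alpha\gamma},\frac{L^\prime}{4\e}]\times [0,N_{\alpha\gamma}])\cap\mathbb N^2$ of Lemma~\ref{medio} and $([0,N_{\alpha\gamma}]\times[N_{\alpha\gamma},\frac{L}{4\e}])\cap\mathbb N^2$ of the present statement are exchanged by the reflection $(s,t)\mapsto(t,s)$, up to the interchange of the roles of $L$ and $L^\prime$. First I would observe that the displayed expression for $g_\e^{\alpha\gamma}(s,t)$ on this strip, namely
\begin{equation}\nonumber
g_\e^{\alpha\gamma}(s,t)=Q_L(s)+P_{L^\prime}(t-N_{\alpha\gamma})+\frac{6L^\prime N_{\alpha\gamma}}{\gamma}(t-N_{\alpha\gamma})+Q_{L^\prime}(N_{\alpha\gamma})+\frac{\e}{\gamma}r_{\alpha\gamma}(t-N_{\alpha\gamma},s)-\frac{\e}{\gamma}\varrho_\e\pi_{\alpha\gamma}(t),
\end{equation}
is obtained from the expression in Lemma~\ref{medio} precisely by swapping $(s,t)\leftrightarrow(t,s)$ and $L\leftrightarrow L^\prime$ (the $\varrho_\e$ correction term moves onto the appropriate variable). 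Hence the entire argument of Lemma~\ref{medio} transfers verbatim.

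The key steps, carried out as in the previous cases, are the following. I would first pass to the limit $\e\to0$: since $\frac{\e}{\gamma}r_{\alpha\gamma}$ and $\frac{\e}{\gamma}\varrho_\e\pi_{\alpha\gamma}$ converge uniformly to $0$ on the relevant compact set, the minimum of $g_\e^{\alpha\gamma}$ is attained, for $\e$ small, among the integer points minimizing the distance from the continuous minimizer of the leading part $Q_L(s)+P_{L^\prime}(t-N_{\alpha\gamma})+\frac{6L^\prime N_{\alpha\gamma}}{\gamma}(t-N_{\alpha\gamma})$. The first summand $Q_L(s)$ is minimized at $\mu(L)$ defined in (\ref{mu-L}), and by the hypothesis $L^\prime\leq\lambda_c^\ast$ the computation in the proof of Lemma~\ref{maggiori} gives that the second block is minimized at $m(L^\prime)-N_{\alpha\gamma}$ with $m(L^\prime)-N_{\alpha\gamma}\geq\frac12$, so that the relevant integer coordinate for $t$ is $N_{\alpha\gamma}+\lfloor m(L^\prime)+\frac12\rfloor-N_{\alpha\gamma}=\lfloor\frac{2\beta\gamma}{3L^\prime}-\frac{2\alpha\gamma}{3}+\frac16\rfloor$, while the $s$-coordinate clamps to $N_{\alpha\gamma}=\varphi(L)\wedge N_{\alpha\gamma}$ on this strip. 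Second, to cover the degenerate cases in which $\mu(L)+\frac12$ or $\mu(L^\prime)-N_{\alpha\gamma}+\frac12$ is an integer, so that the continuous problem has several equidistant integer minimizers, I would invoke the stated monotonicity of $r_{\alpha\gamma}(h,t)$ in each variable on $\{h,t\geq0\}$ to break the tie in the same direction as in Lemma~\ref{medio}, again selecting the stated point.

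The step I expect to require the most care is the tie-breaking via $r_{\alpha\gamma}$, because here the clamped coordinate is $s$ (the short side, associated with $N_{\alpha\gamma}$) rather than $t$, so one must check that the monotonicity of $r_{\alpha\gamma}$ pushes the selected point toward $s=N_{\alpha\gamma}$ correctly and does not create a spurious competing minimizer with $s<N_{\alpha\gamma}$, which would instead belong to the strip treated in Lemma~\ref{minore}. The matching of the two descriptions on the common boundary line $s=N_{\alpha\gamma}$ is what guarantees consistency: by construction the value $\varphi(L)\wedge N_{\alpha\gamma}$ appearing here agrees with the value produced in Lemmas~\ref{minore} and~\ref{maggiori}, so no genuine obstruction arises, but this compatibility is exactly the point that must be verified rather than asserted. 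Everything else is the routine polynomial bookkeeping already rehearsed in the proofs of Theorem~\ref{teo-minimo} and Lemma~\ref{maggiori}.
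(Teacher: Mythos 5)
Your proposal is correct and takes essentially the same route as the paper, which in fact states Lemma \ref{medio2} without a separate proof, relying precisely on the transfer of Lemma \ref{medio}'s argument: uniform convergence of the $\frac{\e}{\gamma}\,r_{\alpha\gamma}$ and $\frac{\e}{\gamma}\varrho_\e\pi_{\alpha\gamma}$ terms reduces the problem to the integer points of the strip nearest to $(\mu(L),m(L^\prime))$, with ties broken by the monotonicity of $r_{\alpha\gamma}$. Your extra verification that $L\leq L^\prime\leq\lambda_c^\ast<\lambda^-$ forces $\mu(L)>N_{\alpha\gamma}-\frac{1}{2}$, so the $s$-coordinate clamps uniquely at $N_{\alpha\gamma}=\varphi(L)\wedge N_{\alpha\gamma}$ with no competing minimizer at $s<N_{\alpha\gamma}$, is exactly the compatibility the paper leaves implicit.
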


\begin{remark}\label{dis-1}
{\rm If $L>\lambda_c^\ast$, 
then the minimum 
of $g^{\alpha\gamma}_\e(s,t)$ in 
$([N_{\alpha\gamma},\frac{L^\prime}{4\e}]\times [0,N_{\alpha\gamma}])\cap \mathbb N^2$ 
is achieved 
for $s=N_{\alpha\gamma}$. This implies that the minimum point of the energy in 
$([0,\frac{L^\prime}{4\e}]\times [0,N_{\alpha\gamma}])\cap \mathbb N^2$ 
is unique and belongs to 
$[0,N_{\alpha\gamma}]^2\cap \mathbb N^2.$ 
The corresponding result holds if $L^\prime>\lambda_c^\ast$, so that in this case 
the minimum point in 
$([0,N_{\alpha\gamma}] \times [0,\frac{L}{4\e}])\cap \mathbb N^2$ 
is unique and belongs to 
$[0,N_{\alpha\gamma}]^2\cap \mathbb N^2$.
}
\end{remark} 

\medskip 

The thesis of Theorem \ref{teo-n} follows by comparing the results of Lemmas \ref{minore}, \ref{medio}, \ref{medio2}, and \ref{maggiori}, and 
recalling Remark \ref{dis-1}. 
\end{proof}

We can use the set $I_\e(n(L),n(L^\prime))$ as a recursive datum for the energy (\ref{energia}).
Note that the weak sites of the initial configurations not in $R_n$ will disappear at the following step 
if $4\alpha\gamma>1$, while the case  $4\alpha\gamma=1$ is exceptional and we may keep or discard any of such weak sites in the following. In any case we can describe the motion through the velocity of the moving rectangle corresponding to $C_\e(h,k)$.

%

%

We finally compute the velocity of a side.
We consider an edge with initial non-scaled length $L$. Theorem \ref{teo-n} shows that it moves only 
if $L<\lambda^+$ (also for $L=\lambda^+$ if $L$ is not the minimal length of the edges, or if $4\alpha\gamma>2$). In this case, the displacement of the edge is given by $
2\varphi(L)$
Hence, the velocity of each side is given by
$$v_\gamma(L)=-\frac{2}{\gamma}
\varphi(L).
$$ 

\begin{remark}[the limit case $\gamma\to+\infty$]\rm
Since $\lim\limits_{\gamma\to\infty}\frac{2N_{\alpha\gamma}}{\gamma}= 4\alpha$, we have 
$$\lambda^\ast_c=\lambda^\ast_c(\gamma)=\frac{4\beta\gamma}{4\alpha\gamma+6N_{\alpha\gamma}+5}\to 
\frac{\beta}{4\alpha}.$$
Choosing 
$$L< \frac{\beta}{4\alpha}$$ 
we get the limit for $\gamma\to+\infty$  
$$\lim_{\gamma\to+\infty}v_L(\gamma)=-\frac{4\beta}{3L}+\frac{4\alpha}{3}.$$
\end{remark}

\subsection*{Acknowledgments}
The first author acknowledges the hospitality of the Mathematical Institute in Oxford, where a substantial part of this work has been carried out. The authors are grateful to R.D.~James for drawing their attention to the connections of this work with problems in Fluid Mechanics.

\end{document}